\newcommand{\C}{\mathcal C}
\newcommand{\F}{\mathbb{F}}
\newtheorem{theorem}{Theorem}[section]
\newtheorem{proposition}[theorem]{Proposition}
\newtheorem{definition}[theorem]{Definition}
\newtheorem{problem}{Problem}
\newtheorem{lemma}[theorem]{Lemma}
\newtheorem{corollary}[theorem]{Corollary}
\newtheorem{remark}[theorem]{Remark}
\begin{document}

\title{Character sums over affine spaces and applications\\}

\author{Lucas Reis}

\affil {Departamento de Matem\'{a}tica,
Universidade Federal de Minas Gerais,
UFMG,
Belo Horizonte MG (Brazil),
 30123-970  \\
 email: lucasreismat@mat.ufmg.br}
\date{\today}

\maketitle

\begin{abstract}
Given a finite field $\F_q$, a positive integer $n$ and an $\F_q$-affine space $\mathcal A\subseteq \mathbb \F_{q^n}$, we provide a new bound on the sum 
$\sum_{a\in \mathcal A}\chi(a)$,
where $\chi$ a multiplicative character of $\F_{q^n}$. We focus on the applicability of our estimate to results regarding the existence of special primitive elements in $\F_{q^n}$. In particular, we obtain substantial improvements on previous works.
\\
{\bf Keywords}: character sums; affine spaces; primitive elements; finite fields\\
{\em MSC} (2010): Primary 11T24,  Secondary 12E20
\end{abstract}

\section{Introduction}
Let $q=p^k$ be a prime power and $\F_q$ be the finite field with $q$ elements. A multiplicative character (resp. additive character) $\chi$ of $\F_{q}$ is an homomorphism from the group $(\F_{q}^*, \times)$ (resp. $(\F_q, +))$ to the unit circle. Characters of finite fields is a valuable tool in Number Theory, Combinatorics and Arithmetic Dynamics, being employed in the proof of results on the existence and distribution of numerous algebraic and combinatorial objects. See~\cite{char} for a rich source of results, problems and techniques in the subject. 

When proving results on existence and distribution with the help of characters,  it is frequently required to provide estimates to sums $$s(\chi, S):=\sum_{s\in S}\chi(s), S\subseteq \F_q.$$ One of the earliest works regarding bounds on character sums is due to Polya and Vinogradov. They considered $q=p$ a prime and $S$ an arbitrary interval of integers, obtaining the famous bound $|s(\chi, S)|\ll \sqrt{p}\log p$ for $\chi$ a  non trivial multiplicative character. This bound was further improved by Burgess~\cite{bu3} to $\ll p^{-\delta(\varepsilon)}|S|$, under the condition $|S|>p^{1/4+\varepsilon}$. The ideas of Burgess were extended to non trivial multiplicative characters of non prime fields, still in the spirit of its early work. More specifically, for a given basis $\{b_1, \ldots, b_k\}$ of the $\F_p$-vector space $\F_{p^k}$, the set $S$ comprises elements $\sum_{i=1}^ka_ib_i$, with the integers $a_i$ lying in non degenerate intervals of integers. For more details, see~\cite{chang2, dl} and the references therein. We observe that such results cannot be applied when $S$ is a generic $\F_p$-vector space of $\F_{q}$. For some special vector spaces, non trivial estimates were provided. If $\theta\in \F_{q}$ generates $\F_q$ over $\F_p$, i.e., $\F_q=\F_p(\theta)$, and 
$$S=\bigoplus _{i=0}^{m-1}\theta^i\cdot \F_q:=\left\{\sum_{i=0}^ma_i\theta^i\,|\, a_i\in \F_p\right\}, m\le k,$$
Burgess~\cite{bu6} proved that $|s(\chi, S)|=O(p^{m(1-\delta(\varepsilon))})$ if $m>k(1/4+\varepsilon)$. For a ``sufficiently'' generic $\F_p$-vector space $S\subseteq \F_q$ of dimension $m>\rho k$ with $\rho>0$, Chang~\cite{chang} proved that $|s(\chi, S)|\le |S|\cdot (\log p)^{-\delta}$ under $n\ll p(\log p)^{-4}$. In the same paper, she proposes a problem on bounding $|s(\chi, S)|$ non trivially for a generic $\F_p$-vector space $S$; see Problem 2~in~\cite{chang}.

For each positive integer $n$, let $\F_{q^n}$ be the unique $n$-degree extension of $\F_q$ and let $V$ an $\F_q$-vector space of dimension $t$. From the well-known Weil bound, we can easily show that $|s(\chi, V)|\le q^{n/2}$ if $\chi$ is a non trivial multiplicative character; see Lemma~\ref{lem:trivia}. This bound becomes trivial if $t\le n/2$, limiting the generality of existence and distribution results where a character sum method is employed. Some interesting examples concern {\em primitive elements} of $\F_{q^n}$, i.e., generators of the cyclic group $\F_{q^n}^*$. Primitive elements with specified properties have been extensively studied (see Chapter~3 of~\cite{char} for classical problems and results). This is mainly motivated by the fact that primitive elements are useful in applications such as Cryptography~\cite{dh}. In~\cite{dig4}, the existence of primitive elements with prescribed {\em digits}~\cite{dig2} is asymptotically proved up to around {\em half } of the digits. In~\cite{R} the author explores variations of the Primitive Normal Basis Theorem~\cite{lenstra}, providing results on the existence of primitive elements in $\F_{q^n}$ whose $\F_q$-Galois conjugates generate an $\F_q$-vector space of dimension $n-k$, where $k\in [0, n(1/2-\varepsilon_{q, n})]$. However, it is unclear whether such results are sharp or not, i.e., we do not have ``if and only if'' conditions in their statements. 

In this paper we discuss character sums estimates over $\F_q$-affine spaces $\mathcal A$ in finite fields. We usually write $\mathcal A=u+\mathcal V$, where $\mathcal V$ is an $\F_q$-vector space. Our main result, Theorems~\ref{thm:main} and~\ref{thm:Main} provide a new upper bound on $|s(\chi, \mathcal A)|$ in the range $t\le n/2$, where $t$ is the dimension of $\mathcal A$ over $\F_q$. This bound is non trivial when $q>n^2$. Although such condition might look restrictive, it can be very powerful in providing existence results. This is nicely exemplified in the following theorem that characterizes the affine spaces over large fields that contain primitive elements.
\begin{theorem}\label{thm:gold}
For each $n\ge 2$ there exists $c(n)>0$ such that, for every $q>c(n)$, an $\F_q$-affine space $\mathcal A=u+\mathcal V\subseteq \F_{q^n}$ of dimension $t\ge 1$ contains a primitive element of $\F_{q^n}$ if and only if one of the following holds:
\begin{enumerate}[(i)]
\item there exists a primitive element $y\in \F_{q^n}$ and divisor $d<n$ of $n$ such that $$y\in \mathcal A\subseteq y\cdot \F_{q^d};$$
\item there exist nonzero elements $y\in\mathcal A$ and $z\in\mathcal V$ such that $\F_{q^n}=\F_q(yz^{-1})$.
\end{enumerate}
\end{theorem}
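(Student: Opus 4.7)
For \emph{necessity}, fix a primitive element $y \in \mathcal A$ and rewrite $\mathcal A = y + \mathcal V$ (possible since $y \in \mathcal A$). If some proper divisor $d$ of $n$ satisfies $\mathcal V \subseteq y\cdot \F_{q^d}$, then $\mathcal A \subseteq y\cdot \F_{q^d}$ and condition~(i) holds. Otherwise, for every prime divisor $p$ of $n$, the set $\mathcal V \cap y\cdot \F_{q^{n/p}}$ is a proper $\F_q$-subspace of $\mathcal V$, hence contains at most $q^{t-1}$ elements. Since the maximal proper subfields of $\F_{q^n}$ are exactly the $\F_{q^{n/p}}$, the set of $z \in \mathcal V\setminus\{0\}$ for which $\F_q(yz^{-1})$ is a proper subfield of $\F_{q^n}$ is contained in $\bigcup_{p\mid n}(\mathcal V \cap y\cdot \F_{q^{n/p}})$, of total cardinality at most $\omega(n)\,q^{t-1}$, where $\omega(n)$ denotes the number of distinct prime divisors of $n$. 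For $q > \omega(n)+1$ this union does not cover $\mathcal V\setminus\{0\}$, so some nonzero $z \in \mathcal V$ realizes $\F_q(yz^{-1}) = \F_{q^n}$, yielding~(ii).

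For \emph{sufficiency}, case~(i) is immediate because $y$ itself is primitive and lies in $\mathcal A$. For case~(ii), consider the one-dimensional $\F_q$-affine subspace $\mathcal A' := y + \F_q\cdot z \subseteq \mathcal A$, of cardinality $q$. I would count primitive elements in $\mathcal A'$ by the standard multiplicative-character expansion of the primitivity indicator, obtaining
\[
N(\mathcal A') = \frac{\phi(q^n-1)}{q^n-1}\sum_{d\mid q^n-1}\frac{\mu(d)}{\phi(d)}\sum_{\ord(\chi)=d} s(\chi,\mathcal A').
\]
The trivial character contributes the positive main term $\frac{\phi(q^n-1)}{q^n-1}\cdot q$. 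Each remaining $s(\chi,\mathcal A')$ is controlled by Theorems~\ref{thm:main} and~\ref{thm:Main}: the hypothesis $\F_q(yz^{-1}) = \F_{q^n}$ is precisely what prevents $\mathcal A'$ from sitting inside a coset of a proper subfield of $\F_{q^n}$, placing us in the regime where the paper's bound is non-trivial and gives an estimate of order $O(n\sqrt q)$. A standard sieve on the divisors of $q^n-1$ then shows that the total error is dominated by the main term once $q$ exceeds some explicit $c(n)$ depending only on $n$, so $N(\mathcal A') > 0$ and $\mathcal A$ contains a primitive element.

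The main obstacle is the character-sum step in case~(ii): one must verify that the paper's general bound, specialized to a one-dimensional affine subspace with the generation hypothesis $\F_q(yz^{-1}) = \F_{q^n}$, is indeed non-trivial, and then package the sieve on $q^n-1$ so that the final threshold $c(n)$ simultaneously absorbs the $q > n^2$ requirement of Theorem~\ref{thm:main}, the $q > \omega(n)+1$ requirement from the necessity step, and the divisor sums appearing in the error analysis. Since all three constraints depend only on $n$, their combination is routine, but it is the place where the full strength of the paper's new bound is used.
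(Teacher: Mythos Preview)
Your argument is correct and follows the paper's approach: the necessity direction is exactly Proposition~\ref{prop:crucial} applied with $y$ taken to be the primitive element, and the sufficiency direction is the character-sum sieve of Proposition~\ref{prop:main} (via Theorem~\ref{thm:Main} and Lemma~\ref{lem:func}), leading to the same threshold $\sqrt{q}>n\cdot W(q^n-1)$.

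The one genuine difference is that in case~(ii) you pass to the one-dimensional subspace $\mathcal A'=y+\F_q\cdot z$ before running the sieve, whereas the paper applies Proposition~\ref{prop:main} directly to $\mathcal A$ in its full dimension $t$. Your reduction is legitimate and in fact slightly more economical: since $\sum_{a\in\mathcal A'}\chi(a)=\chi(z)\sum_{b\in\F_q}\chi(yz^{-1}+b)$ and $yz^{-1}$ has degree $n$, the required bound $|s(\chi,\mathcal A')|\le(n-1)\sqrt q$ is Katz's Theorem~\ref{thm:katz} applied verbatim, without needing the general machinery of Theorem~\ref{thm:Main}. The paper's route, working with all of $\mathcal A$, yields the stronger quantitative statement $\mathcal P(\mathcal A)\ge q^{t-\varepsilon}$ on the \emph{number} of primitive elements in $\mathcal A$, which your dimension-one reduction does not recover; but for the bare existence claim of Theorem~\ref{thm:gold} the two are equivalent.
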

Theorem~\ref{thm:gold} can be viewed as a generalization of works of Carlitz~\cite{C} and Davenport~\cite{D} regarding the extensions $\F_{q^n}/\F_q$ with the {\em translate property}. These are the extensions such that, for every $\theta\in \F_{q^n}^*$ with $\F_q(\theta)=\F_{q^n}$, 
there exists a primitive element in the affine space $\theta+\F_q$. They prove that there exists a constant $c_n>0$ such that $\F_{q^n}/\F_q$ possesses the translate property for every $q>c_n$.

Further applications of Theorem~\ref{thm:Main} provide asymptotically sharp results on the problems considered in~\cite{dig4, R}. In the context of~\cite{dig4}, we show that we can prescribe $n(1-1/r)-1\ge n/2-1$ of the digits of primitive elements in $\F_{q^n}$, where $r$ is the smallest prime factor of $n$. We also prove that this number of prescribed digits is optimal. On the work in~\cite{R}, we improve the range $ [0, n(1/2-\varepsilon_{q, n}))$ there to the best possible $[0, n-2]$, obtaining an ``if and only if'' theorem; this is a major step towards the solution of a problem proposed in~\cite{HMPT}. We also obtain a seemingly counter intuitive configuration of primitive elements that arises from a conjecture on Artin-Scherier extensions $\F_{p^p}$.

The structure of the paper is given as follows. Section 2 provides background material that is used along the way. In Section 3 we provide our main results on character sums estimates. In Section 4 some applications of our main results are given.
\section{Preparation}
This section provides some auxiliary results that are further required. We start with a basic definition.

\begin{definition}
Let $\overline{\F}_q$ be the algebraic closure of $\F_q$. An element $\alpha\in \overline{\F}_q$ has degree $k$ over $\F_q$ if one of the following (equivalent) statements hold:
\begin{enumerate}[(i)]
\item $\alpha$ generates $\F_{q^k}$ over $\F_q$, i.e., $\F_{q^k}=\F_q(\alpha)$;
\item the degree of the minimal polynomial of $\alpha$ over $\F_q$ equals $k$;
\item $\F_{q^k}$ is the smallest extension of $\F_q$ that contains $\alpha$.
\end{enumerate}
\end{definition}

We have the following result.

\begin{proposition}\label{prop:crucial}
Fix $q$ a prime power and $n\le q$. For any $\F_q$-affine space $\mathcal A=u+\mathcal V\subseteq \F_{q^n}$ of dimension $t\ge 1$, and any nonzero $y\in \mathcal A$, one of the following holds:

\begin{enumerate}[(i)]
\item there exists a divisor $d$ of $n$ such that $d<n$ and $y\in \mathcal A\subseteq y\cdot \F_{q^d}$;
\item there exists nonzero $z\in \mathcal V$ such that $\F_{q^n}=\F_q(yz^{-1})$.
\end{enumerate}
\end{proposition}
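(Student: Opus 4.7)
The plan is to rephrase both alternatives in terms of the single $\F_q$-subspace $\mathcal{V}':=y^{-1}\mathcal{V}$ of $\F_{q^n}$, which has the same dimension $t$ as $\mathcal{V}$. Since $y\in \mathcal{A}=u+\mathcal{V}$, we may rewrite $\mathcal{A}=y+\mathcal{V}=y(1+\mathcal{V}')$. Condition (i) then becomes the statement that $\mathcal{V}'\subseteq \F_{q^d}$ for some proper divisor $d$ of $n$ (because $1\in\F_{q^d}$ gives $\mathcal{A}\subseteq y\cdot \F_{q^d}$ automatically), while (ii) becomes the statement that $\mathcal{V}'$ contains some nonzero element of degree $n$ over $\F_q$ (since $z'=y^{-1}z$ has the same $\F_q$-degree as $yz^{-1}$).

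Now I would argue by contradiction-style dichotomy: assume (ii) fails. Then every nonzero $v'\in\mathcal{V}'$ has $\F_q$-degree strictly less than $n$, hence $v'\in \F_{q^{n/p}}$ for some prime $p\mid n$ (every proper subfield of $\F_{q^n}$ is contained in a maximal one). Letting $p_1,\dots,p_r$ be the distinct prime divisors of $n$ and $W_i:=\mathcal{V}'\cap \F_{q^{n/p_i}}$, this gives the covering
\[
\mathcal{V}'=\bigcup_{i=1}^{r}W_i,
\]
where each $W_i$ is an $\F_q$-subspace of $\mathcal{V}'$. The goal is to conclude that some $W_i$ equals all of $\mathcal{V}'$, which immediately yields (i) with $d=n/p_i$.

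To close this last step I would invoke the classical covering lemma for vector spaces: an $\F_q$-vector space of positive dimension cannot be written as a union of fewer than $q+1$ proper $\F_q$-subspaces. Since the number $r$ of distinct prime divisors of $n$ satisfies $r\le n\le q<q+1$, the lemma forces one of the $W_i$ to be non-proper, i.e., $W_i=\mathcal{V}'$. (In the corner case $\dim\mathcal{V}'=1$ the covering lemma is trivial, since the only proper subspace is $\{0\}$ and $\mathcal{V}'$ contains a nonzero element by hypothesis.) Translating back through $\mathcal{V}=y\mathcal{V}'$ yields $\mathcal{A}\subseteq y\cdot \F_{q^{n/p_i}}$ with $y\in\mathcal{A}$, as required.

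The only genuine subtlety is the quantitative input to the covering step: we need to guarantee that the number of maximal proper subfields of $\F_{q^n}$ is at most $q$, and this is exactly where the hypothesis $n\le q$ enters the picture (via the crude bound $r\le n$). Everything else is essentially a bookkeeping reformulation of the two alternatives in terms of the auxiliary space $\mathcal{V}'$.
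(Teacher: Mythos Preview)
Your proof is correct and follows essentially the same route as the paper's. The paper works directly with the sets $\mathcal C_d=\{v\in\mathcal V\setminus\{0\}: yv^{-1}\in\F_{q^d}\}$, which under your normalization are precisely $y\cdot W_i\setminus\{0\}$; rather than invoking the covering lemma, it carries out the union-bound count $\bigl|\bigcup_i \mathcal C_{n/p_i}\bigr|\le s\cdot q^{t-1}<q^t-1$ (using $s<n\le q$) to produce an element of degree $n$ explicitly. Your appeal to the ``fewer than $q+1$ proper subspaces'' covering lemma is the same counting argument packaged abstractly, and your preliminary substitution $\mathcal V'=y^{-1}\mathcal V$ is a cosmetic cleanup of the paper's setup.
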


\begin{proof}
 For each divisor $d$ of $n$, let $\mathcal C_{d}$ be the set of nonzero elements $v\in \mathcal V$ such that $yv^{-1}\in \F_{q^d}$ and let $\delta_{d}=|\mathcal C_{d}|$. We observe that $yv^{-1}\in \F_{q^d}$ if and only if $a(yv^{-1})^{-1}=a\cdot y^{-1}v\in \F_{q^d}$ for every $a\in \F_q$. The latter implies that $\mathcal C_{d}\cup\{0\}$ is an $\F_q$-vector space. If its dimension equals $t$, the dimension of $\mathcal V$, it follows that $\mathcal V\subseteq y\cdot \F_{q^d}$. In this case, since $y\in \mathcal A$, we have that $y\in \mathcal A\subseteq y\cdot \F_{q^d}$. Otherwise, if $p_1, \ldots, p_s$ are the prime divisors of $n$, we have that $\delta_{n/p_i}\le q^{t-1}$ and so  
$$\delta_{n}\ge |V|-1-\sum_{i=1}^s\delta_{n/p_i}\ge q^t-1-s\cdot q^{t-1}>0,$$
since $s<n\le q$. 

\end{proof}

\subsection{Multiplicative characters and primitivity}

We provide some basic facts on multiplicative characters of finite fields, including a character sum formula for the characteristic function of primitive elements in $\F_{q^n}$. Fix $\theta$ a primitive element in $\F_{q^n}$, i.e., $\langle \theta\rangle=\F_{q^n}^*$. The set $\widehat{\F_{q^n}}$ of multiplicative characters of $\F_{q^n}$ comprises the characters $\chi_{\theta, k}, 0\le k\le q^n-2$ defined as follows: if $a=\theta^{d}$ with $0\le d\le q^n-2$, we set
$$\chi_{\theta, k}(a)=\mathbf{e}\left(\frac{2\pi i (kd)}{q^n-1}\right),$$
where $\mathbf{e}(z):=e^{z }$ is the complex exponential function. We fix $\theta$ and simply write $\chi_{\theta, k}=\chi_k$. The set $\widehat{\F_{q^n}}$ is a cyclic multiplicative group of order $q^n-1$, hence isomorphic to $\F_{q^n}^*$. The identity element is the trivial character $\chi_0$, mapping any nonzero element $a\in \F_{q^n}$ to $1\in\mathbb C$. We naturally extend the multiplicative characters to $0\in \F_{q^n}$ by setting $\chi(0)=0$ for every $\chi\in \widehat{\F_{q^n}}$. We observe that, with our notation, the character $\chi_k$ has order $\frac{q^n-1}{\gcd(q^n-1, k)}$. In particular, for each divisor $d$ of $q^n-1$, there exist $\varphi(d)$ multiplicative characters of order $d$. For each divisor $d$ of $q^n-1$, let $$\Lambda(d)=\left\{\chi_{\frac{(q-1)e}{d}}\,|\, 0\le e\le d,\, \gcd(e, d)=1 \right\},$$ be the set of elements in $\widehat{\F_{q^n}}$ of order $d$. We have the following well-known result.

\begin{lemma}[\cite{HMPT}, Section 5.2]\label{lem:func}
Let $\mu$ and $\varphi$ denote the Moebius and Euler totient functions over the integers, respectively. Then,  for each $y\in \F_{q^n}$, the sum
$$\frac{\varphi(q^n-1)}{q^n-1}\sum_{d|q^n-1}\frac{\mu(d)}{\varphi(d)}\sum_{\chi\in \Lambda(d)}\chi(y),$$
equals $1$ if $y$ is a primitive element of $\F_{q^n}$ and equals $0$, otherwise.
\end{lemma}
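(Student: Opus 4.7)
The plan is to derive the identity from two standard ingredients: a M\"obius-theoretic characterisation of primitivity and character orthogonality on the cyclic group $\F_{q^n}^*$. Set $N := q^n - 1$. An element $y \in \F_{q^n}^*$ is primitive if and only if $y$ fails to be a $p$-th power for every prime $p \mid N$. In a cyclic group, the subgroups $(\F_{q^n}^*)^{p_1}$ and $(\F_{q^n}^*)^{p_2}$ intersect in $(\F_{q^n}^*)^{p_1 p_2}$ for distinct primes, so inclusion-exclusion over the squarefree divisors of $N$ gives
$$\mathbf{1}\{y \text{ primitive}\} \;=\; \sum_{\substack{d \mid N \\ d \text{ squarefree}}} \mu(d)\,\mathbf{1}\{y \in (\F_{q^n}^*)^d\}.$$

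Next, since $(\F_{q^n}^*)^d$ is the unique subgroup of index $d$, orthogonality of characters of the quotient (a cyclic group of order $d$) yields
$$\mathbf{1}\{y \in (\F_{q^n}^*)^d\} \;=\; \frac{1}{d}\sum_{\chi \colon \chi^d = 1}\chi(y) \;=\; \frac{1}{d}\sum_{e \mid d}\sum_{\chi \in \Lambda(e)}\chi(y),$$
grouping characters of order dividing $d$ according to their exact order. Substituting this into the previous display and swapping the order of summation,
$$\mathbf{1}\{y \text{ primitive}\} \;=\; \sum_{e \mid N}\Bigl(\sum_{\chi \in \Lambda(e)}\chi(y)\Bigr)\Bigl(\sum_{\substack{d \mid N \\ e \mid d}}\frac{\mu(d)}{d}\Bigr),$$
where the restriction to squarefree $d$ is absorbed automatically by $\mu(d)$.

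It remains to evaluate the parenthesised coefficient and show it equals $\frac{\varphi(N)}{N}\cdot\frac{\mu(e)}{\varphi(e)}$. Writing $d = ef$ with $\gcd(e,f) = 1$ and $f$ a squarefree divisor of $N/e$, the inner sum factors as
$$\frac{\mu(e)}{e}\prod_{\substack{p \mid N \\ p \nmid e}}\!\left(1 - \frac{1}{p}\right);$$
combined with the standard identities $\varphi(N)/N = \prod_{p \mid N}(1-1/p)$ and $\varphi(e)/e = \prod_{p \mid e}(1-1/p)$ (valid since $\mu(e)\ne 0$ forces $e$ squarefree), this simplifies to the asserted expression. The main obstacle, though entirely routine, is this final arithmetic manipulation; if preferred, one can avoid it by parameterising $y = \theta^a$ with $0\le a<N$, recognising $\sum_{\chi\in\Lambda(e)}\chi(y)$ as a Ramanujan sum $c_e(a)$, and verifying the two cases $\gcd(a,N)=1$ and $\gcd(a,N)>1$ directly via the classical evaluation $c_e(a) = \mu(e/g)\varphi(e)/\varphi(e/g)$ with $g=\gcd(e,a)$.
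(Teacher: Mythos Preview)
Your argument is correct. The paper does not supply its own proof of this lemma; it is quoted from \cite{HMPT} without argument, so there is no in-paper proof to compare against. What you have written is the standard derivation of Vinogradov's indicator formula for primitive elements: inclusion--exclusion over the subgroups $(\F_{q^n}^*)^p$ for primes $p\mid N$, followed by character orthogonality and the multiplicative evaluation of $\sum_{e\mid d\mid N}\mu(d)/d$. Each step checks out, including the case $\mu(e)=0$ (where the inner sum is empty because no squarefree $d$ can be divisible by a non-squarefree $e$). The only omission is the boundary case $y=0$, which is immediate from the paper's convention $\chi(0)=0$.
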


\subsection{Character sum bounds}
Here we provide some known character sum estimates that are of our interest.
\begin{theorem}[\cite{LN}, Theorem 5.41]\label{Gauss1} Let $\eta$ be a multiplicative character of $\F_{q^n}$ of order $r>1$ and $F\in \F_{q^n}[x]$ be a polynomial of positive degree such that $F$ is not of the form $ag(x)^r$ for some $g\in \F_{q^n}[x]$ with degree at least $1$ and $a\in \F_{q^n}$. Suppose that $z$ is the number of distinct roots of $F$ in its splitting field over $\F_{q}$. Then the following holds: 
$$\left|\sum_{c\in \F_{q^n}}\eta(F(c))\right|\le (z-1)\sqrt{q}.$$
\end{theorem}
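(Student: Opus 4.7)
This is the classical Weil bound for character sums, and the plan is the standard algebraic–geometry route. I would first associate to the sum the affine curve $C\colon y^r = F(x)$ over $\F_{q^n}$, where $r>1$ is the order of $\eta$. The hypothesis that $F$ is not of the form $a g(x)^r$ is precisely what is needed to show that $y^r - F(x)$ is absolutely irreducible in $\overline{\F}_q[x,y]$, so that $C$ is geometrically irreducible.

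Next I would use the identity that, for $\alpha \in \F_{q^n}^*$, the number of $r$-th roots of $\alpha$ in $\F_{q^n}$ equals $\sum_{j=0}^{r-1} \eta^j(\alpha)$ (which holds because $r \mid q^n-1$). Summing over $c \in \F_{q^n}$ expresses the affine point count $\#C(\F_{q^n})$ as $q^n + \sum_{j=1}^{r-1} S_j$ up to a bounded error coming from the zeros of $F$, where $S_j := \sum_{c} \eta^j(F(c))$. Passing to the smooth projective model $\overline C$ handles the points at infinity and resolves the singularities sitting above repeated roots of $F$. The genus $g$ of $\overline C$ is then bounded via Riemann--Hurwitz applied to the degree-$r$ morphism $\overline C \to \mathbb{P}^1$, $(x,y) \mapsto x$, which is ramified above the $z$ distinct roots of $F$ and possibly at infinity, yielding an estimate of the shape $2g \le (z-1)(r-1)$.

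Applying the Hasse--Weil bound $|\#\overline C(\F_{q^n}) - (q^n+1)| \le 2g\sqrt{q^n}$ and isolating each $S_j$ (either by running the same construction with $\eta^j$ in place of $\eta$, or via the factorization of the zeta function of $\overline C$ into L-functions attached to the nontrivial characters $\eta^j$) yields $|S_j| \le (z-1)\sqrt{q^n}$ for each $j$, which is the claimed estimate for $\eta = \eta^1$. The main obstacle is twofold: first, verifying absolute irreducibility of $y^r - F(x)$ precisely under the combinatorial hypothesis imposed on $F$; and second, carrying out Riemann--Hurwitz carefully enough that the resulting constant matches $(z-1)$ rather than the weaker $(z-1)(r-1)$ that appears at the zeta-function level. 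This technical work is done in full in Lidl--Niederreiter, which is why the paper simply invokes that reference.
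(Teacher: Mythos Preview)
The paper gives no proof of this statement at all: it is quoted verbatim as Theorem~5.41 of Lidl--Niederreiter and simply used as a black box. Your sketch via the curve $y^r=F(x)$, Riemann--Hurwitz, and the Hasse--Weil bound is the standard route to this Weil estimate and is consistent with what one finds in that reference; you also correctly anticipate, in your final sentence, that the paper merely invokes the citation. (Incidentally, your $\sqrt{q^n}$ is the right quantity; the $\sqrt{q}$ in the displayed bound is a typo in the paper, as one sees from how the result is applied in the proof of Lemma~\ref{lem:trivia}.)
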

As an application of the previous theorem, we have the following result.

\begin{lemma}\label{lem:trivia}
Let $\mathcal A\subseteq \F_{q^n}$ be an $\F_q$-affine space of dimension $t>0$ and $\chi$ a non trivial multiplicative character of $\F_{q^n}$. Then
$$\left|\sum_{a\in \mathcal A}\chi(a)\right|\le q^{n/2}.$$
\end{lemma}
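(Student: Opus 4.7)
The plan is to Fourier-expand the indicator function of $\mathcal A$ in the additive characters of $\F_{q^n}$, thereby reducing $\sum_{a\in \mathcal A}\chi(a)$ to a short linear combination of Gauss sums whose absolute values are controlled by the classical Weil evaluation $q^{n/2}$.

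To carry this out, I first fix a non-trivial additive character $\psi_0$ of $\F_q$ and set $\psi_y(x):=\psi_0(\Tr_{\F_{q^n}/\F_q}(yx))$ for each $y\in\F_{q^n}$; as $y$ varies these give all additive characters of $\F_{q^n}$. Letting $\mathcal V^{\perp}:=\{y\in \F_{q^n}:\Tr_{\F_{q^n}/\F_q}(yv)=0\text{ for all }v\in \mathcal V\}$, which is an $\F_q$-subspace of dimension $n-t$, the non-degeneracy of the trace pairing together with $\sum_{c\in\F_q}\psi_0(c)=0$ yields the orthogonality identity
$$\mathbf{1}_{\mathcal V}(x)=\frac{1}{q^{n-t}}\sum_{y\in \mathcal V^{\perp}}\psi_y(x).$$
Substituting $\mathbf{1}_{\mathcal A}(x)=\mathbf{1}_{\mathcal V}(x-u)$ into $\sum_{a\in \mathcal A}\chi(a)=\sum_{x\in \F_{q^n}}\chi(x)\mathbf{1}_{\mathcal A}(x)$ and interchanging the two sums, I would obtain
$$\sum_{a\in \mathcal A}\chi(a)=\frac{1}{q^{n-t}}\sum_{y\in \mathcal V^{\perp}}\psi_y(-u)\,G(\chi,\psi_y),$$
where $G(\chi,\psi_y):=\sum_{x\in \F_{q^n}}\chi(x)\psi_y(x)$ is the usual Gauss sum. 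The term $y=0$ contributes $0$ because the corresponding $\psi_y$ is trivial on $\F_{q^n}$ and $\chi$ is non-trivial; for each of the remaining $q^{n-t}-1$ choices of $y$ the character $\psi_y$ is non-trivial, so the classical identity $|G(\chi,\psi_y)|=q^{n/2}$ applies. The triangle inequality then delivers
$$\left|\sum_{a\in \mathcal A}\chi(a)\right|\le \frac{q^{n-t}-1}{q^{n-t}}\,q^{n/2}<q^{n/2}.$$

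I do not see a genuine obstacle here: every ingredient (Fourier inversion on $(\F_{q^n},+)$ and the absolute value of a Gauss sum for a non-trivial multiplicative and a non-trivial additive character) is completely standard, and the affine shift by $u$ enters only through the harmless phase $\psi_y(-u)$. The only nuance worth noting is that this argument in fact produces the strict inequality $<q^{n/2}$; the non-strict version stated in the lemma is the cleanest universal form, and is morally sharp in the $t>n/2$ regime targeted by the paper, whereas for $t\le n/2$ the trivial bound $|\mathcal A|=q^t\le q^{n/2}$ already suffices.
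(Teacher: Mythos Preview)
Your proof is correct, and it takes a genuinely different route from the paper's argument. The paper parametrizes $\mathcal A$ by a separable polynomial $f\in\F_q[x]$ of degree $q^{n-t}$ (quoting an external lemma from~\cite{R2}) so that $\sum_{a\in\mathcal A}\chi(a)=q^{-(n-t)}\sum_{x\in\F_{q^n}}\chi(f(x))$, and then applies the Weil bound of Theorem~\ref{Gauss1} to the polynomial character sum. Your argument instead Fourier-expands $\mathbf 1_{\mathcal A}$ via additive characters, reducing directly to $q^{n-t}-1$ Gauss sums of absolute value $q^{n/2}$. Both approaches yield the same constant $(q^{n-t}-1)/q^{n-t}<1$ in front of $q^{n/2}$; yours is more self-contained (no external parametrization lemma, and only the classical Gauss sum evaluation rather than the full Weil bound for $\chi(F(c))$), while the paper's approach fits its overall theme of relating affine-space sums to polynomial character sums. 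A small stylistic remark: your closing sentence about the strict inequality is fine commentary, but note that both proofs actually give the same sharpened bound $\frac{q^{n-t}-1}{q^{n-t}}\,q^{n/2}$, so there is no real difference in strength.
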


\begin{proof}
From Lemma~3.4 in~\cite{R2}, there exists a separable polynomial $f\in \F_q[x]$ of degree $q^{n-t}$ such that $f(\F_{q^n})=\mathcal A$ and, for each $a\in \mathcal A$, the equation $f(x)=a$ has exactly $q^{n-t}$ solutions in $\F_{q^n}$. Therefore,
$$\sum_{a\in \mathcal A}\chi(a)=\frac{1}{q^{n-t}}\sum_{x\in \F_{q^n}}f(x).$$
From construction, the polynomial $f(x)$ is separable, hence is not of the form $ag(x)^r$ for some $g\in \F_{q^n}[x]$ with degree at least $1$ and $a\in \F_{q^n}$. Since $\chi$ is non trivial, Theorem~\ref{Gauss1} entails that $\left|\sum_{x\in \F_{q^n}}\chi(g(x))\right|\le q^{3n/2-t}$, from where the result follows.
\end{proof}

The following character sum estimate, due to Katz~\cite{K}, is crucial in this paper.

\begin{theorem}[Katz]\label{thm:katz}
Let $\theta$ be an element of degree $n$ over $\F_q$ and $\chi$ a non trivial multiplicative character of $\F_{q^n}$. Then 
$$\left|\sum_{a\in \F_q}\chi(\theta+a)\right|\le (n-1)\sqrt{q}.$$
\end{theorem}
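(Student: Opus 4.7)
The obstruction to applying Theorem~\ref{Gauss1} directly is twofold: the sum ranges over $\F_q$ rather than $\F_{q^n}$, and $x+\theta$ lies in $\F_{q^n}[x]\setminus \F_q[x]$. My plan is to dispatch the favourable case in which $\chi$ factors through the norm by an elementary reduction to Theorem~\ref{Gauss1} over $\F_q$, and to invoke the Grothendieck--Lefschetz--Deligne formalism in the remaining cases.

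Consider the minimal polynomial of $-\theta$ over $\F_q$,
$$F(x)=\prod_{i=0}^{n-1}(x+\theta^{q^i})\in \F_q[x],$$
which is irreducible of degree $n$ and satisfies $F(a)=\mathrm{Nm}_{\F_{q^n}/\F_q}(a+\theta)$ for $a\in \F_q$. When $\chi=\eta\circ \mathrm{Nm}_{\F_{q^n}/\F_q}$ for some non-trivial character $\eta$ of $\F_q$ of order $r>1$, irreducibility of $F$ precludes any factorisation $F=a\cdot g(x)^r$ with $\deg g\ge 1$; Theorem~\ref{Gauss1} over $\F_q$ (with $z=n$) then yields
$$\Bigl|\sum_{a\in \F_q}\chi(\theta+a)\Bigr|=\Bigl|\sum_{a\in \F_q}\eta(F(a))\Bigr|\le (n-1)\sqrt{q}.$$

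For a $\chi$ that does not factor through the norm, no such elementary reduction exists, and I would follow Katz~\cite{K} by interpreting the sum cohomologically. Associate to $\chi$ the rank-one Kummer sheaf $\mathcal L_\chi$ and pull it back, via Weil restriction, along the $\F_q$-morphism $a\mapsto \theta+a$ to obtain a lisse rank-one $\overline{\Q}_\ell$-sheaf $\mathcal F$ on the open curve $U:=\mathbb A^1_{\F_q}\setminus\{-\theta,-\theta^q,\ldots,-\theta^{q^{n-1}}\}$, whose punctures are precisely the Galois orbit of $-\theta$---distinct and $n$ in number by the degree hypothesis. The Grothendieck--Lefschetz trace formula reads
$$\sum_{a\in \F_q}\chi(\theta+a) = -\mathrm{Tr}\bigl(\mathrm{Frob}_q\bigm| H^1_c(U_{\overline{\F}_q},\mathcal F)\bigr),$$
with $H^0_c$ and $H^2_c$ vanishing because non-triviality of $\chi$ on $\F_{q^n}^*$ forces $\mathcal F$ to be geometrically non-constant. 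Deligne's purity bounds each Frobenius eigenvalue on $H^1_c$ by $\sqrt{q}$, and a Grothendieck--Ogg--Shafarevich computation---tame because $\mathrm{ord}(\chi)\mid q^n-1$ is coprime to $p$---identifies $\dim H^1_c$ with $-\chi_c(U_{\overline{\F}_q})=n-1$, delivering the claimed bound.

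The main obstacle lies in the general case: setting up the descent of the Kummer sheaf along $\F_{q^n}/\F_q$ and verifying the geometric non-constancy of $\mathcal F$ both rest on the hypothesis $[\F_q(\theta):\F_q]=n$, which guarantees that the $n$ Galois conjugates of $-\theta$ are distinct and that the local monodromy data is generic enough to prevent unwanted cancellations in the Euler characteristic count.
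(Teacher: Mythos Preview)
The paper does not prove Theorem~\ref{thm:katz}; it is quoted verbatim from Katz~\cite{K} as an external input, so there is no in-paper argument against which to compare your attempt.

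On its own merits, your two-case split is sound. The elementary reduction when $\chi=\eta\circ\mathrm{Nm}_{\F_{q^n}/\F_q}$ is correct: irreducibility (hence separability) of the minimal polynomial $F$ rules out any factorisation $F=c\,g^r$ with $\deg g\ge 1$, and the Weil bound over $\F_q$ delivers exactly $(n-1)\sqrt q$. Katz does not isolate this case---his argument is uniform in $\chi$---but it is a legitimate and self-contained shortcut for those $\chi$ trivial on the norm-one subgroup.

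For general $\chi$ your outline tracks Katz's proof: Grothendieck--Lefschetz on $\mathbb A^1_{\F_q}$ with the single degree-$n$ closed point at $-\theta$ removed, vanishing of $H^0_c$ and $H^2_c$ by geometric non-constancy, the tame Euler--Poincar\'e count $\dim H^1_c=n-1$, and Deligne's purity. The one soft spot you correctly flag is the construction of $\mathcal F$ over $\F_q$. ``Pulling back the Kummer sheaf via Weil restriction'' is suggestive but not quite how Katz proceeds: he builds the sheaf directly on the $\F_q$-curve by specifying its local monodromy at the closed point $-\theta$ (whose residue field is $\F_{q^n}$, so that $\chi$ is exactly a character of the tame inertia there) and at $\infty$, then invokes rigidity. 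Your identification of the hypothesis $[\F_q(\theta):\F_q]=n$ as the mechanism guaranteeing $n$ distinct geometric punctures and the correct Euler-characteristic count is on target.
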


\section{Main result}
The main result of this paper is the following theorem.

\begin{theorem}\label{thm:main}
Let $\mathcal A\subseteq \mathbb \F_{q^n}$ be an $\F_q$-affine space of dimension $t\ge 1$, where $n>1$. For each divisor $d$ of $n$, let $n_{\mathcal A, d}$ be the number of elements in $\mathcal A$ whose degree over $\F_q$ equals $d$. If $\chi$ is a nontrivial multiplicative character of $\F_{q^n}$, then 
\begin{equation}\label{eq:bound1}\left|\sum_{b\in \F_q}\sum_{a\in \mathcal A}\chi(a+b)\right|\le \sum_{d|n}n_{\mathcal A, d}\cdot \delta_{\chi, d},\end{equation}
where $\delta_{\chi, d}=q$ if $\chi |_{\F_{q^d}}$ is trivial and $\delta_{\chi, d}=\min\{q, (d-1)\sqrt{q}\}$, otherwise. In particular, if $n_{\mathcal A, n}>0$, we have that
\begin{equation}\label{eq:bound}\left|\sum_{b\in \F_q}\sum_{a\in \mathcal A}\chi(a+b)\right|< n\cdot q^{t+1/2}.\end{equation}
\end{theorem}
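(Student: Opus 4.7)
My plan is to prove \eqref{eq:bound1} by swapping the order of summation and applying Theorem~\ref{thm:katz} to each inner sum, and then to derive \eqref{eq:bound} by a careful accounting that exploits the constraint $n_{\mathcal A,n}>0$. For \eqref{eq:bound1}, I would fix $a\in\mathcal A$ of degree $d$ over $\F_q$ and analyze $S_a=\sum_{b\in\F_q}\chi(a+b)$. Since $a+\F_q\subseteq\F_{q^d}$, this is a character sum for $\chi|_{\F_{q^d}}$. If $\chi|_{\F_{q^d}}$ is trivial then every $\chi(a+b)$ with $a+b\neq0$ equals $1$, giving $|S_a|\le q=\delta_{\chi,d}$. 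If $\chi|_{\F_{q^d}}$ is nontrivial, Theorem~\ref{thm:katz} applied to $\chi|_{\F_{q^d}}$ and the degree-$d$ element $a$ gives $|S_a|\le(d-1)\sqrt{q}$, and combined with the trivial bound $|S_a|\le q$ this yields $|S_a|\le\delta_{\chi,d}$. Grouping $a\in\mathcal A$ by degree proves \eqref{eq:bound1}.

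For \eqref{eq:bound}, I would set $D_0=\{d\mid n:\chi|_{\F_{q^d}}\text{ is trivial}\}$. The key structural observation is that $D_0$ is closed under $\lcm$: this follows from $q^{\lcm(d_1,d_2)}-1=\lcm(q^{d_1}-1,q^{d_2}-1)$ combined with the characterization that $\chi|_{\F_{q^d}}$ is trivial iff the order of $\chi$ divides $(q^n-1)/(q^d-1)$. Hence whenever $D_0\neq\emptyset$ it admits a unique maximum $d^*$, and $d^*<n$ because $\chi$ itself is nontrivial. Since $D_0=\{d:d\mid d^*\}$, the set of $a\in\mathcal A$ whose degree lies in $D_0$ is exactly $\mathcal A\cap\F_{q^{d^*}}$. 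Set $M=|\mathcal A\cap\F_{q^{d^*}}|$ (with $M=0$ if $D_0=\emptyset$). The hypothesis $n_{\mathcal A,n}>0$ implies $\mathcal A\not\subseteq\F_{q^{d^*}}$, so $\mathcal A\cap\F_{q^{d^*}}$ is a proper $\F_q$-affine subspace of $\mathcal A$, and therefore $M\le q^{t-1}$.

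Using $\delta_{\chi,d}=q$ for $d\in D_0$ and $\delta_{\chi,d}\le(d-1)\sqrt{q}\le(n-1)\sqrt{q}$ for $d\notin D_0$, together with $\sum_{d\mid n}n_{\mathcal A,d}=q^t$, I would obtain
$$\sum_{d\mid n}n_{\mathcal A,d}\delta_{\chi,d}\le qM+(n-1)\sqrt{q}(q^t-M)=(n-1)q^{t+1/2}+M\bigl(q-(n-1)\sqrt{q}\bigr).$$
If $q\le(n-1)^2$, the last summand is non-positive and the bound is $(n-1)q^{t+1/2}<nq^{t+1/2}$. If $q>(n-1)^2$, then $M\le q^{t-1}$ gives $M(q-(n-1)\sqrt{q})\le Mq\le q^t<q^{t+1/2}$, so $(n-1)q^{t+1/2}+q^t<nq^{t+1/2}$. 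The main obstacle will be the structural claim that $D_0$ has a unique maximum paired with the geometric observation that $n_{\mathcal A,n}>0$ forces $M\le q^{t-1}$; the rest is a routine two-case verification.
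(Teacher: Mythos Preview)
Your derivation of \eqref{eq:bound1} is correct and in fact more carefully written than the paper's: you explicitly restrict $\chi$ to $\F_{q^d}$ before invoking Theorem~\ref{thm:katz}, which is exactly what is needed.

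The argument for \eqref{eq:bound}, however, contains a genuine error. The identity $q^{\lcm(d_1,d_2)}-1=\lcm(q^{d_1}-1,q^{d_2}-1)$ is simply false: for $q=2$, $d_1=2$, $d_2=3$ one gets $2^6-1=63$ on the left and $\lcm(3,7)=21$ on the right. The correct statement is $\gcd(q^{d_1}-1,q^{d_2}-1)=q^{\gcd(d_1,d_2)}-1$, and this does \emph{not} dualize to $\lcm$. Consequently $D_0$ is not closed under $\lcm$ in general. With your own characterization, $d\in D_0$ iff $(q^d-1)\mid k$ where $\chi=\chi_k$; taking $n=6$ and $k=(q+1)(q^3-1)=\lcm(q^2-1,q^3-1)$ gives $2,3\in D_0$ but $6\notin D_0$, so $D_0=\{1,2,3\}$ has two maximal elements.

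Worse, the bound $M\le q^{t-1}$ can genuinely fail. With $n=6$ and $D_0=\{1,2,3\}$ as above, take $t=4$ and let $\mathcal V=\F_{q^2}+W$ where $W\subseteq\F_{q^3}$ is any $3$-dimensional $\F_q$-subspace containing $\F_q$. Then $\dim\mathcal V=4$, and one checks that $\mathcal V\cap\F_{q^2}=\F_{q^2}$, $\mathcal V\cap\F_{q^3}=W$, and any $\alpha+w$ with $\alpha\in\F_{q^2}\setminus\F_q$, $w\in W\setminus\F_q$ has degree $6$, so $n_{\mathcal V,6}>0$. But then
\[
M=|\mathcal V\cap(\F_{q^2}\cup\F_{q^3})|=q^2+q^3-q>q^3=q^{t-1},
\]
and your chain $M(q-(n-1)\sqrt{q})\le Mq\le q^t$ breaks.

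The paper avoids this by a coarser but robust count: it bounds $\delta_{\chi,d}\le q$ for \emph{every} $d<n$, and controls $\Delta:=\sum_{d<n}n_{\mathcal A,d}$ via the union $\bigcup_i(\mathcal A\cap\F_{q^{n/p_i}})$ over the prime divisors $p_1,\dots,p_s$ of $n$, obtaining $\Delta\le s\cdot q^{t-1}$. The extra factor $s<n$ is then absorbed in the final estimate. Your argument can be repaired in the same spirit by replacing $M\le q^{t-1}$ with $M\le m\cdot q^{t-1}$, where $m\le s$ is the number of maximal elements of $D_0$, and adjusting the final case analysis accordingly.
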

\begin{proof}
For each divisor $d$ of $n$, let $C_{\mathcal A, d}$ be the set of elements $a\in \mathcal A$ with degree $d$ over $\F_q$. We have that
$$\left|\sum_{b\in \F_q}\sum_{a\in \mathcal A}\chi(a+b)\right|\le \sum_{d|n}\sum_{a\in \C_{\mathcal A, d}}\left|\sum_{b\in \F_q}\chi(a+b)\right|.$$
Employing Theorem~\ref{thm:katz}, we obtain Eq.~\eqref{eq:bound1}. We proceed to Eq.~\eqref{eq:bound}. For each divisor $d$ of $n$ with $d<n$, let $\Delta_{d}$ be the set of elements in $\mathcal A$ whose degree over $\F_q$ is a divisor of $d$. In particular, $u-v\in \F_{q^d}$ whenever $u, v\in \Delta_d$ and so the set $\Delta_d$ is of the form $t+V_d$, where $V_d$ is an $\F_q$-vector space contained in $\F_{q^d}$. Therefore, if $n_{\mathcal A, n}>0$, we have that $V_d$ has dimension at most $t-1$. In this case, if $p_1, \ldots, p_s$ are the distinct prime divisors of $n$, we have that $|\Delta_{n/p_i}|\le q^{t-1}$ and, from construction,
$$\bigcup_{i=1}^s\Delta_{n/p_i}=\bigcup_{d|n\atop d<n}C_{\mathcal A, d}.$$
In conclusion, $\Delta:=\sum_{d|n\atop d<n}n_{\mathcal A, d}\le s\cdot q^{t-1}$. Applying estimates to Eq.~\eqref{eq:bound1}, we obtain
\begin{align*}\left|\sum_{b\in \F_q}\sum_{a\in \mathcal A}\chi(a+b)\right| & \le (q^t-\Delta)(n-1)\sqrt{q}+\Delta\cdot q\\ {}&=q^{t+1/2}(n-1)+\Delta\cdot \sqrt{q}(\sqrt{q}-n+1)\\ {} & \le q^{t+1/2}(n-1)+sq^{t-1/2}(\sqrt{q}-n+1)\\ {} & <n\cdot q^{t+1/2},\end{align*}
for $n\le \sqrt{q}+1$ (observe that $s<n$). If $n>\sqrt{q}$,~Eq.~\eqref{eq:bound} is trivial since there are $q^{t+1}$ terms in the sum, each a complex number of norm $1$.
\end{proof}

The following theorem provides an alternative form of Theorem~\ref{thm:main}, which can be more useful for applications.

\begin{theorem}\label{thm:Main}
Let $\mathcal A=u+\mathcal V\subseteq\mathbb \F_{q^n}$ be an $\F_q$-affine space of dimension $t\ge 1$, where $n>1$. Suppose that there exists a nonzero element $y\in \mathcal V$ such that the set $\mathcal A_y:=\{ay^{-1}\,|\, a\in \mathcal A\}$ contains an element of degree $n$ over $\F_q$. If $\chi$ is a non trivial multiplicative character of $\F_{q^n}$ we have that 
\begin{equation}\label{eq:main}\left|\sum_{a\in \mathcal A}\chi(a)\right|< n\cdot q^{t-1/2}.\end{equation}
\end{theorem}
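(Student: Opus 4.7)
The plan is to reduce Theorem~\ref{thm:Main} to Theorem~\ref{thm:main} by ``normalizing'' the affine space so that it contains $\F_q$ as an $\F_q$-subspace. First, using multiplicativity of $\chi$ together with $|\chi(y)|=1$, I would rewrite
$$\sum_{a\in \mathcal{A}}\chi(a) = \chi(y)\sum_{a'\in \mathcal{A}_y}\chi(a'),$$
which reduces the problem to bounding $|\sum_{a'\in \mathcal{A}_y}\chi(a')|$, where $\mathcal{A}_y = uy^{-1} + y^{-1}\mathcal{V}$.

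Because $y\in \mathcal{V}$, the $\F_q$-vector space $y^{-1}\mathcal{V}$ contains $1$, hence contains all of $\F_q$. I would then choose a complementary $\F_q$-subspace $\mathcal{W}\subseteq y^{-1}\mathcal{V}$ of dimension $t-1$ so that $y^{-1}\mathcal{V} = \F_q \oplus \mathcal{W}$, and set $\mathcal{A}' := uy^{-1} + \mathcal{W}$. Then $\mathcal{A}_y$ is the disjoint union of the cosets $b + \mathcal{A}'$ with $b\in \F_q$, so
$$\sum_{a'\in \mathcal{A}_y}\chi(a') = \sum_{b\in \F_q}\sum_{a\in \mathcal{A}'}\chi(a+b).$$
Since the degree of an element of $\F_{q^n}$ over $\F_q$ is unchanged by translation by an element of $\F_q$, the hypothesis that $\mathcal{A}_y$ contains an element of degree $n$ forces $\mathcal{A}'$ to contain such an element as well.

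For $t\ge 2$, the space $\mathcal{A}'$ has positive dimension $t-1$, and applying Theorem~\ref{thm:main} to it directly produces the bound $n\cdot q^{(t-1)+1/2} = n\cdot q^{t-1/2}$ for the double sum above, which is exactly Eq.~\eqref{eq:main}. The edge case $t=1$ must be handled separately, since there $\mathcal{A}'$ is the singleton $\{uy^{-1}\}$ and Theorem~\ref{thm:main} does not formally apply; in this case I would appeal directly to Katz's estimate (Theorem~\ref{thm:katz}) on the element $uy^{-1}$, which is of degree $n$ by hypothesis, to obtain $|\sum_{b\in \F_q}\chi(uy^{-1}+b)|\le (n-1)\sqrt{q} < n\cdot q^{1/2}$. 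The only minor obstacle is this transfer of the degree-$n$ hypothesis from $\mathcal{A}_y$ down to $\mathcal{A}'$ and handling the $t=1$ boundary case; the substantive work has already been carried out in Theorem~\ref{thm:main}.
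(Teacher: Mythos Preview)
Your proof is correct and follows essentially the same route as the paper: normalize by $y$ so that the underlying vector space contains $\F_q$, split off an $\F_q$-summand to write the sum as a double sum over $\F_q$ and a $(t-1)$-dimensional affine piece, and invoke Eq.~\eqref{eq:bound} of Theorem~\ref{thm:main}. You are in fact slightly more careful than the paper on two points: you justify explicitly why the degree-$n$ element in $\mathcal A_y$ descends to $\mathcal A'$ (via invariance of $\F_q(\alpha)$ under translation by $\F_q$), and you treat the boundary case $t=1$ separately by appealing directly to Katz's estimate, whereas the paper applies Theorem~\ref{thm:main} to the $(t-1)$-dimensional piece without commenting on the fact that that theorem is stated only for dimension $\ge 1$.
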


\begin{proof}
Since $\chi$ is multiplicative, $\left|\sum_{a\in \mathcal A}\chi(a)\right|=\left|\sum_{a\in \mathcal A_y}\chi(a)\right|$. By the definition we have that $\mathcal A_y=u_y+V_y$, where $V_{y}$ is an $\F_q$-vector space of dimension $t$ containing $\F_q$. Therefore, we necessarily have a decomposition $\mathcal A_y=\F_q\oplus \mathcal B$, where $\mathcal B$ is an $\F_q$-affine space of dimension $t-1$. In particular, the following holds:
$$\sum_{a\in \mathcal A_y}\chi(a)=\sum_{b\in \F_q}\sum_{a\in \mathcal B}\chi(a+b).$$
From hypothesis, $\mathcal B$ contains an element whose degree over $\F_q$ equals $n$, and so Eq.~\eqref{eq:main} follows by Eq.~\eqref{eq:bound} in Theorem~\ref{thm:main}.
\end{proof}

\begin{remark}
We observe that Theorem~\ref{thm:Main} is trivial if $n>\sqrt{q}$. In the range $n\le \sqrt{q}$, Proposition~\ref{prop:crucial} entails there is no loss of generality by assuming that $\mathcal A$ satisfies the condition required in Theorem~\ref{thm:Main}. In fact,  if $\mathcal A\subseteq y\cdot \F_{q^d}$ for some proper divisor $d$ of $n$ and some nonzero $y\in\mathcal A$, any character sum estimate over $\mathcal A$ reduces to a character sum estimate over another $\F_q$-affine space $\mathcal A_0\subseteq \F_{q^d}$.
\end{remark}


\section{Application: primitive elements in special configurations}
In this section we provide some applications of our main results. We consider them separately into subsections. We start with the proof of Theorem~\ref{thm:gold}.

\subsection{Proof of Theorem~\ref{thm:gold}}
Taking $q\ge n$, Proposition~\ref{prop:crucial} implies that $\mathcal A$ satisfies item (i) or (ii) in Theorem~\ref{thm:gold} whenever $\mathcal A$ contains a primitive element. The other direction follows by the following proposition.

\begin{proposition}\label{prop:main}
Let $n\ge 1$ be an integer and let $\mathcal A=u+\mathcal V\subseteq \F_{q^n}$ be an $\F_q$-affine space of dimension $t\ge 1$. Suppose that there exist nonzero elements $y\in\mathcal A$ and $z\in\mathcal V$ such that $yz^{-1}$ has degree $n$ over $\F_q$. Then the number $\mathcal P(\mathcal A)$ of primitive elements in $\mathcal A$ satisfies the following inequality:
\begin{equation*}\label{eq:sieve}\mathcal P(\mathcal A)> q^t\cdot \frac{\varphi(q^n-1)}{q^n-1}\left(1-\frac{n\cdot W(q^n-1)}{\sqrt{q}}\right),\end{equation*}
where $W(q^n-1)$ denotes the number of squarefree divisors of $q^n-1$. In particular if $n$ is fixed and $\varepsilon>0$ is arbitrary, there exists $c=c(\varepsilon)>0$ such that, for $q>c(\varepsilon)$ we have that $$\mathcal P(\mathcal A)\ge q^{t-\varepsilon}.$$
\end{proposition}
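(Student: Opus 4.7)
The plan is to apply the standard character sum expression for the indicator of primitivity, namely Lemma~\ref{lem:func}, and then bound the resulting non-trivial character sums using Theorem~\ref{thm:Main}.

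First, I would sum the formula from Lemma~\ref{lem:func} over all $a\in\mathcal A$ and interchange the order of summation to obtain
$$\mathcal P(\mathcal A)=\frac{\varphi(q^n-1)}{q^n-1}\sum_{d\mid q^n-1}\frac{\mu(d)}{\varphi(d)}\sum_{\chi\in\Lambda(d)}\sum_{a\in\mathcal A}\chi(a).$$
Since $\mu(d)=0$ on non-squarefree $d$, only squarefree divisors contribute. The $d=1$ term produces $\sum_{a\in\mathcal A}\chi_0(a)=|\mathcal A\setminus\{0\}|\ge q^t-1$, which will be the main term; the remaining $W(q^n-1)-1$ squarefree $d>1$ form the error.

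Next, I would verify the hypothesis of Theorem~\ref{thm:Main}: with the $z\in\mathcal V$ provided, the dilate $\mathcal A_z=z^{-1}\mathcal A$ contains $yz^{-1}$, which has degree $n$ over $\F_q$ by assumption. Thus Theorem~\ref{thm:Main} yields $\bigl|\sum_{a\in\mathcal A}\chi(a)\bigr|<n\,q^{t-1/2}$ for every non-trivial $\chi$. Since $|\Lambda(d)|=\varphi(d)$, the factors of $\varphi(d)$ cancel and the total error is strictly less than $(W(q^n-1)-1)\,n\,q^{t-1/2}$, giving
$$\mathcal P(\mathcal A)>\frac{\varphi(q^n-1)}{q^n-1}\bigl(q^t-1-(W(q^n-1)-1)\,n\,q^{t-1/2}\bigr),$$
and a short manipulation---using that $n\,q^{t-1/2}\ge 1$ whenever $t\ge 1$---absorbs the $-1$ and reshapes this into the claimed bound.

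Finally, for the ``In particular'' statement, I would invoke the classical elementary estimates $W(m)=m^{o(1)}$ and $\varphi(m)/m\gg 1/\log\log m$. With $n$ fixed and $q\to\infty$, the first implies $nW(q^n-1)/\sqrt{q}\to 0$, while the second implies $\varphi(q^n-1)/(q^n-1)\ge q^{-\varepsilon/2}$ for $q$ sufficiently large; combining these yields $\mathcal P(\mathcal A)\ge q^{t-\varepsilon}$ for $q$ beyond some threshold $c(\varepsilon)$. The only non-automatic step is identifying the correct dilation element that activates the hypothesis of Theorem~\ref{thm:Main}; the rest is a routine sieve computation and should present no obstacle.
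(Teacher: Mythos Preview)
Your proposal is correct and follows essentially the same approach as the paper: apply Lemma~\ref{lem:func}, separate the trivial-character main term, bound each non-trivial character sum via Theorem~\ref{thm:Main} (your verification that $z\in\mathcal V$ gives $yz^{-1}\in\mathcal A_z$ of degree $n$ is exactly the intended check), absorb the stray $-1$, and finish with the standard $W(m)=m^{o(1)}$ and $\varphi(m)=m^{1-o(1)}$ estimates. The paper's own proof is line-for-line the same computation.
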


\begin{proof}
In the notation of Lemma~\ref{lem:func}, we have that
\begin{align}\begin{aligned}\label{eq:pt}\frac{(q^n-1)\cdot \mathcal P(\mathcal A)}{\varphi(q^n-1)}= & \sum_{a\in\mathcal A}\sum_{d|q^n-1}\frac{\mu(d)}{\varphi(d)}\sum_{\chi\in \Lambda(d)}\chi(a)\\ = &\sum_{a\in\mathcal A}\chi_0(a)+\sum_{d|q^n-1\atop d\ne 1}\frac{\mu(d)}{\varphi(d)}\sum_{\chi\in \Lambda(d)}\sum_{a\in\mathcal A}\chi(a),\end{aligned}\end{align}
where $\chi_0$ is the trivial multiplicative character. Since $\mathcal A$ has at most one zero element, it follows that $\sum_{a\in\mathcal A}\chi_0(a)\ge q^t-1$. For $d\ne 1$, every character $\chi\in \Lambda(d)$ is non trivial. From hypothesis, $\mathcal A$ satisfies the conditions of Theorem~\ref{thm:Main} and so $\left|\sum_{a\in\mathcal A}\chi(y)\right|\le nq^{t-1/2}$ for every $\chi\in \Lambda(d), d\ne 1$. Applying estimates to Eq.~\eqref{eq:pt}, we obtain that
\begin{align*}\frac{(q^n-1)\cdot \mathcal P(\mathcal A)}{\varphi(q^n-1)}& \ge  q^t-1-nq^{t-1/2}\sum_{d|q^n-1\atop d>1, \mu(d)\ne 0}\frac{1}{\varphi(d)}\sum_{\chi\in\Lambda(d)}1\\ {} & > q^t-n\cdot W(q^n-1)\cdot q^{t-1/2}.\end{align*}
The bound $\frac{(q^n-1)\cdot \mathcal P(\mathcal A)}{\varphi(q^n-1)}\ge q^{t-\varepsilon}$ under $q>c(\varepsilon)$ follows from the well-known bounds $W(\ell)=\ell^{o(1)}$ and $\varphi(\ell)= \ell^{1-o(1)}$. 
\end{proof}

\subsection{Grassmannians avoiding primitive elements}
For integers $n\ge 1$ and $1\le k\le n$, the Grassmannian $\mathcal G(n, k)$ is the set of all $k$-dimensional $\F_q$-vector spaces in $\F_{q^n}$. Considering the context of this paper, it is natural to to ask what is the greatest integer $t=t(n, q)\le n$ such that $\mathcal G(n, t)$ contains an element $V$ free of primitive elements. Equivalently, $t(n, q)$ is the unique integer such that for every integer $t(n, q)<k\le n$, each element $V\in \mathcal G(n, k)$ contains a primitive element. As an application of Theorem~\ref{thm:gold}, we provide a formula for $t(n, q)$ when $q$  is sufficiently large.

\begin{proposition}\label{prop:grass}
For each positive integer $n\ge 2$, let $p_n$ be its smallest prime factor. Then there exists a constant $c(n)>0$ such that, for every $q>c(n)$, we have that
$$t(n, q)=n/{p_n}.$$
\end{proposition}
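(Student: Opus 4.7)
The plan is to prove both $t(n,q)\ge n/p_n$ and $t(n,q)\le n/p_n$ for $q$ larger than a suitable $c(n)$. The lower bound is essentially free: the subfield $\F_{q^{n/p_n}}\subseteq \F_{q^n}$ is an $\F_q$-subspace of dimension $n/p_n$, and it contains no primitive element of $\F_{q^n}$, because every primitive element has degree $n$ over $\F_q$ while every element of $\F_{q^{n/p_n}}$ has degree dividing $n/p_n<n$. Thus $\mathcal G(n,n/p_n)$ has a member free of primitives, giving $t(n,q)\ge n/p_n$.

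For the reverse inequality, I would fix an integer $k$ with $k>n/p_n$ and an arbitrary $V\in\mathcal G(n,k)$, and show that $V$ must contain a primitive element of $\F_{q^n}$ whenever $q$ exceeds a constant depending on $n$. The natural tool is Theorem~\ref{thm:gold}, which characterizes the affine spaces containing a primitive element via conditions (i) and (ii). I view $V$ as an $\F_q$-affine space with $u=0$ and $\mathcal V=V$ and argue that condition (i) of Theorem~\ref{thm:gold} cannot hold for $V$: indeed, $V\subseteq y\cdot\F_{q^d}$ with $d\mid n$ and $d<n$ would force $\dim V\le d$, and the key elementary observation is that every proper divisor $d$ of $n$ satisfies $d\le n/p_n$ (choosing any prime $p\mid n/d$ gives $d\mid n/p$, so $d\le n/p\le n/p_n$ since $p\ge p_n$); this contradicts $k>n/p_n$. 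To verify that condition (ii) of Theorem~\ref{thm:gold} does hold, I would apply Proposition~\ref{prop:crucial} to $V$ at any nonzero $y\in V$ (its mild hypothesis $n\le q$ is absorbed by taking $c(n)\ge n$). Alternative (i) of Proposition~\ref{prop:crucial} is the same kind of containment just ruled out by the dimension count, hence alternative (ii) must hold and produces a nonzero $z\in V$ with $\F_{q^n}=\F_q(yz^{-1})$, which is precisely condition (ii) of Theorem~\ref{thm:gold}. Consequently $V$ contains a primitive element of $\F_{q^n}$ for all $q$ beyond the threshold supplied by Theorem~\ref{thm:gold}, and therefore $t(n,q)\le n/p_n$.

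The argument is essentially bookkeeping on dimensions, with the two substantive inputs being Proposition~\ref{prop:crucial} and Theorem~\ref{thm:gold} (both already available). The main conceptual step that could be easy to overlook is identifying the largest proper divisor of $n$ as $n/p_n$, so that a single dimension inequality handles every possible proper subfield containment at once; once this point is in hand, there is no further obstacle, and the constant $c(n)$ appearing in the conclusion can be taken to be the maximum of $n$ and the $c(n)$ delivered by Theorem~\ref{thm:gold}.
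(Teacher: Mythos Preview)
Your proof is correct and follows essentially the same route as the paper: the lower bound via the subfield $\F_{q^{n/p_n}}$, and the upper bound by verifying condition (ii) of Theorem~\ref{thm:gold} through the observation that every proper divisor of $n$ is at most $n/p_n$. The only difference is that you invoke Proposition~\ref{prop:crucial} as a black box to produce the element $z$ with $\F_q(yz^{-1})=\F_{q^n}$, whereas the paper reproves this by a short direct count of the elements $vz^{-1}$ lying in proper subfields; your remark that condition~(i) of Theorem~\ref{thm:gold} ``cannot hold'' is superfluous (its holding would already give a primitive element), but it does no harm since the same dimension inequality is what you actually need to rule out alternative~(i) of Proposition~\ref{prop:crucial}.
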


\begin{proof}
We observe that $\F_{q^{n/p_n}}\in \mathcal G(n, n/p_n)$ is a subfield of $\F_{q^n}$, hence it cannot contain primitive elements. In particular, $t(n, q)\ge n/p_n$. Pick an integer $k> n/p_n$ and let $V\in \mathcal G(n, k)$. We claim that $V$ contains two nonzero elements $y, z$ such that the degree of $yz^{-1}$ equals $n$. By Theorem~\ref{thm:gold}, this proves the result. Let $z$ be a nonzero element of $V$ and, for each divisor $d$ of $n$, let $\delta_d$ be the number of elements $vz^{-1}, v\in V$ with degree $d$ over $\F_q$. We have the trivial bound 
$$\sum_{d|n\atop d<n}\delta_d\le \sum_{s=1}^{n/p_n}q^i<q^{n/p_n+1}\le |V|,$$
and so $\delta_n>0$. 
\end{proof}

\subsubsection{Primitive elements and digits}
Motivated by works of Mauduit and Rivat~\cite{MR, MR2} on the famous Gelfond Problems about digits over the integers, Dartyge and Sarkozy~\cite{dig2} introduced the notion of digits over finite fields. If $\mathcal B=\{b_1, \ldots, b_k\}$ is a basis for $\F_{q^n}$, regarded as an $\F_q$-vector space, then every $y\in \F_{q^n}$ is written uniquely as $\sum_{i=1}^na_ib_i$, where $a_i\in \F_q$. The elements $a_1, \ldots, a_n$ are called the {\em digits} of $y$ with respect to the basis $\mathcal B$. In~\cite{dig2} the authors explore the existence and number of polynomial values $P(z)$ with prescribed sum of digits with respect to arbitrary basis, where $P$ is a polynomial and $z$ runs over the whole field $\F_{q^n}$, or over primitive elements. Further results in this context are developed in~\cite{dig4, dig5}. Most notably, in~\cite{dig4} the author explores the existence of polynomial values $P(z)$ with $k$ prescribed digits $a_{i_1}, \ldots, a_{i_k}\in \F_{q}$, where $1\le i_1<\cdots<i_k\le n$ and the argument $z$ runs over the whole field $\F_{q^n}$, or over the set of primitive elements. The main results there, Theorems 1.1 and 1.6 show that we can asymptotically reach the interval $k\in [1, n/2)$. Equivalently, one can prescribe up to around half of the digits of polynomial values and polynomial values with primitive arguments. For the special case $P(z)=z$ with $z$ running over the set of primitive elements, we are simply counting primitive elements with prescribed digits. In this direction, we have a nice improvement of the previous result.

\begin{proposition}\label{prop-2}
For each positive integer $n\ge 2$, let $p_n$ be its smallest prime factor. Then there exists a constant $c(n)>0$ such that, for every $q>c(n)$ and every basis $\mathcal B=\{b_1, \ldots, b_n\}$, there exists a primitive element of $\F_{q^n}$ with up to $n-n/{p_n}-1\ge n/2-1$ digits prescribed with respect to $\mathcal B$. 
\end{proposition}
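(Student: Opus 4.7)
The plan is to translate the statement into Theorem~\ref{thm:gold} by interpreting sets of elements with prescribed digits as $\F_q$-affine subspaces of $\F_{q^n}$. If we fix $k\le n-n/p_n-1$ digits by imposing $a_{i_j}=\alpha_{i_j}\in \F_q$ for $1\le i_1<\cdots<i_k\le n$, the set of elements $\sum_{i=1}^na_ib_i\in \F_{q^n}$ satisfying these constraints is an $\F_q$-affine space $\mathcal A=u+\mathcal V$ with $u=\sum_{j=1}^k\alpha_{i_j}b_{i_j}$ and $\mathcal V=\bigoplus_{i\notin\{i_1,\ldots,i_k\}}\F_q\cdot b_i$. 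Its dimension is $t=n-k\ge n/p_n+1$, and the bound $n-n/p_n-1\ge n/2-1$ is immediate from $p_n\ge 2$.

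To apply Theorem~\ref{thm:gold}, I would verify condition (ii) for $\mathcal A$ (condition (i) is automatically ruled out since every proper divisor $d$ of $n$ satisfies $d\le n/p_n<t$, so the $d$-dimensional subspace $y\cdot\F_{q^d}$ cannot contain $\mathcal A$). For this I would adapt the counting argument used in the proof of Proposition~\ref{prop:grass}: fix any nonzero $z\in \mathcal V$, and count the ``bad'' $y\in \mathcal A$, namely those for which $yz^{-1}\in \F_{q^d}$ for some proper divisor $d|n$. For such a $d$, the set $\mathcal A\cap z\cdot \F_{q^d}$ is either empty or an affine subspace of cardinality at most $q^d$. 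Using that every proper divisor of $n$ lies in $\{1,\ldots,n/p_n\}$ and bounding by a full geometric series,
$$\left|\bigcup_{d|n,\,d<n}\mathcal A\cap z\cdot \F_{q^d}\right|\le \sum_{d|n,\,d<n}q^d\le \sum_{s=1}^{n/p_n}q^s<q^{n/p_n+1}\le q^t=|\mathcal A|.$$
Hence some $y\in \mathcal A$ satisfies $yz^{-1}\notin \F_{q^d}$ for every proper divisor $d|n$, i.e.\ $\F_{q^n}=\F_q(yz^{-1})$; such a $y$ is necessarily nonzero. This verifies condition (ii), so Theorem~\ref{thm:gold} delivers a primitive element of $\F_{q^n}$ inside $\mathcal A$ whenever $q>c(n)$.

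The main obstacle lies in the counting step above, which forces the strict inequality $t>n/p_n$ and is precisely what produces the exponent $n-n/p_n-1$ in the statement. Once this inequality is in place the argument is essentially automatic: everything reduces to an application of Theorem~\ref{thm:gold} to the affine space $\mathcal A$ constructed from the prescribed digits, with the constant $c(n)$ inherited directly from that theorem.
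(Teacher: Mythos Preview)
Your proof is correct and follows essentially the same approach as the paper, which simply observes that the set with $k$ prescribed digits is an $\F_q$-affine space of dimension $n-k\ge n/p_n+1$ and then defers to the counting argument of Proposition~\ref{prop:grass}. Your explicit verification of condition (ii) of Theorem~\ref{thm:gold} via the geometric-series bound is precisely that argument, carried out for affine rather than linear spaces; the side remark that condition (i) cannot hold for dimensional reasons is correct but unnecessary, since either condition suffices.
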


\begin{proof}
By the definition, the set of elements in $\F_{q^n}$ with $k$ digits prescribed $a_{i_1}, \ldots, a_{i_k}\in \F_{q}$ comprises an $\F_q$-affine space of dimension $n-k$. If $k\le n-n/{p_n}-1$, we have that $n-k\ge n/p_n+1$ and the result follows by a similar argument employed in Proposition~\ref{prop:grass}. We omit details.
\end{proof}
We emphasize that the bound $n-n/{p_n}-1$ in Proposition~\ref{prop-2} is sharp for arbitrary bases. In fact, let $\mathcal B_0=\{b_1, \ldots, b_{n/p_n}\}$ be an $\F_q$-basis for the field $\F_{q^{n/p_n}}$ and let $\mathcal B_1=\{b_1, \ldots, b_{n/p_n}, b_{n/p_n+1}, \cdots, b_{n}\}$ be any completion to an $\F_q$-basis for $\F_{q^n}$. In particular, if we prescribe the $n-n/p_n$ digits $$a_{n/p_n+1}=\cdots=a_{n}=0,$$ the corresponding elements lie in $\F_{q^{n/p_n}}$, hence none of them can be a primitive element of $\F_{q^n}$.

\subsection{On primitive $k$-normal elements}
For $\beta\in \F_{q^n}$, the elements $\beta, \beta^q, \ldots, \beta^{q^{n-1}}$ are the $\F_q$-Galois conjugates of $\beta$. The element $\beta$ is normal over $\F_q$ if their conjugates comprise an $\F_q$-basis for $\F_{q^n}$. The celebrated Primitive Normal Basis Theorem (PNBT) ensures the existence of normal elements that are also primitive for every finite field extension. Its first proof was given by Lenstra and Schoof~\cite{lenstra} an a free computer proof was later given by Cohen and Huczynska~\cite{cohen}. Following the concept of normal elements, Huczynska et al~\cite{HMPT} introduced the notion of $k$-normal elements. These are the elements  $\beta\in \F_{q^n}$ for which the $\F_q$-Galois conjugates $\beta, \beta^q, \ldots, \beta^{q^{n-1}}$ generate an $\F_q$-vector space of dimension $n-k$. In this context, $0$-normal elements are the original normal elements and $0\in \F_{q^n}$ is the unique $n$-normal element. Motivated by the PBNT, they proposed a challenging problem (see Problem 6.3 in~\cite{HMPT}).

\begin{problem}\label{pr}
Determine the pairs $(n, k)$ such that there exist primitive $k$-normal elements in $\F_{q^n}$ over $\F_q$.
\end{problem}

Before discussing Problem~1, let us provide basic information on $k$-normal elements, which can be found in~\cite{HMPT}.

\begin{lemma}\label{lem:basic}
For each element $\alpha\in \F_{q^n}$, the set of polynomials $$g(x)=\sum_{i=0}^ta_ix^i\in \F_q[x],$$ such that $0=g\circ \alpha:=\sum_{i=0}^ta_i\alpha^{q^i}$ is an ideal of $\F_q[x]$. This ideal is generated by a monic polynomial $m_{\alpha, q}(x)$, the $\F_q$-order of $\alpha$. Moreover, the following hold:
\begin{enumerate}[(i)]
\item $m_{\alpha, q}$ is a divisor of $x^n-1$;
\item $\alpha$ is $k$-normal over $\F_q$ if and only if $m_{\alpha, q}$ has degree $n-k$;
\item for each monic divisor $g\in \F_q[x]$ of $x^n-1$, there exist $\Phi_q(g)$ elements $\alpha\in \F_{q^n}$ such that $m_{\alpha, q}=g$. Here $\Phi_q(g)$ denotes the polynomial analogue for the Euler totient function. This function satisfies $$(q-1)^{\deg(g)}\le \Phi_q(g)\le q^{\deg(g)}-1.$$
\end{enumerate}
\end{lemma}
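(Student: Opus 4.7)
The plan is to recognize $\F_{q^n}$ as an $\F_q[x]$-module, where $x$ acts as the Frobenius $\beta\mapsto \beta^q$ (extended $\F_q$-linearly). Under this identification, the expression $g\circ \alpha=\sum a_i\alpha^{q^i}$ is exactly the module action of $g=\sum a_ix^i$ on $\alpha$, and the set of polynomials with $g\circ \alpha=0$ is precisely the annihilator $\mathrm{Ann}_{\F_q[x]}(\alpha)$. Closure under addition is immediate, while closure under multiplication by arbitrary $h\in \F_q[x]$ follows from the $\F_q$-linearity of the Frobenius, which yields $(hg)\circ \alpha=h\circ (g\circ \alpha)$. Since $\F_q[x]$ is a PID, this ideal admits a unique monic generator $m_{\alpha, q}$.

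For (i), every $\beta\in \F_{q^n}$ satisfies $\beta^{q^n}=\beta$, so $x^n-1$ annihilates $\alpha$ and therefore $m_{\alpha, q}$ divides $x^n-1$. For (ii), I would observe that the $\F_q$-span of $\alpha, \alpha^q, \ldots, \alpha^{q^{n-1}}$ coincides with the cyclic submodule $\F_q[x]\cdot \alpha\cong \F_q[x]/(m_{\alpha, q})$, whose $\F_q$-dimension equals $\deg(m_{\alpha,q})$. Thus $\alpha$ is $k$-normal precisely when $\deg(m_{\alpha, q})=n-k$.

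For (iii), I would invoke the Normal Basis Theorem to fix a normal element $\gamma\in \F_{q^n}$; then the $\F_q[x]$-linear map $\F_q[x]/(x^n-1)\to \F_{q^n}$ sending $f\mapsto f\circ \gamma$ is an isomorphism of $\F_q[x]$-modules. Under this bijection, $\alpha$ has $\F_q$-order $g$ if and only if $\gcd(f, x^n-1)=(x^n-1)/g$, and the number of residues $f \pmod{x^n-1}$ satisfying this is exactly $\Phi_q(g)$ by a Moebius-type counting argument over the PID $\F_q[x]$ (the direct analogue of the cyclic group count that $\mathbb Z/n$ has $\varphi(n/d)$ elements of order $n/d$). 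For the bounds, factor $g=\prod p_i^{e_i}$ into distinct monic irreducibles to obtain $\Phi_q(g)=\prod q^{(e_i-1)\deg p_i}(q^{\deg p_i}-1)$; both $\Phi_q(g)\le q^{\deg g}-1$ and $\Phi_q(g)\ge (q-1)^{\deg g}$ then follow from elementary inequalities on each factor.

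Conceptually the lemma is a translation of classical order-theoretic facts about cyclic groups into their polynomial analogues over $\F_q[x]$, so there is no real conceptual obstacle. The only step drawing on external input is the Normal Basis Theorem, used in (iii) to guarantee that $\F_{q^n}$ is a cyclic $\F_q[x]$-module, from which the counting formula follows transparently.
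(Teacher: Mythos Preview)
The paper does not supply its own proof of this lemma; it simply states the result as background and cites \cite{HMPT} for the details. Your proposal is correct and essentially reconstructs the standard argument: viewing $\F_{q^n}$ as an $\F_q[x]$-module via Frobenius identifies $m_{\alpha,q}$ with the annihilator generator, (i) and (ii) follow from the cyclic-module isomorphism $\F_q[x]\cdot\alpha\cong\F_q[x]/(m_{\alpha,q})$, and (iii) uses the Normal Basis Theorem to realize $\F_{q^n}\cong\F_q[x]/(x^n-1)$ and then counts elements of prescribed order exactly as one counts elements of given order in a cyclic group. The factorwise verification of the bounds on $\Phi_q(g)$ is also correct. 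There is nothing to compare against in the paper itself, and no gap in your argument.
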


In particular, the existence of $k$-normal elements in $\F_{q^n}$ over $\F_q$, without any restriction of being primitive, is conditioned to the existence of an $(n-k)$-degree monic divisor of $x^n-1$ over $\F_q$. As pointed out in~\cite{HMPT}, the only values of $k$ for which this is ensured for generic $(q, n)$ are $k=0, 1, n-1, n$. Therefore  Problem~\ref{pr} should be reformulated, including the parameter $q$.

By providing a simple bound on the multiplicative order of $(n-1)$-normal elements, the authors in~\cite{HMPT} show that no $(n-1)$-normal element can be primitive if $n>1$. Therefore, we should consider Problem~\ref{pr} with $k$ in the range $[0, n-2]$. On the other hand, the case $k=0$ is the PBNT and the case $k=1$ yield positive answer to Problem~\ref{pr}, under the necessary condition $n\ge 3$~\cite{RT18}. This is not surprise, since $k$-normal elements are quite abundant in $\F_{q^n}$ if $k=0$ or $k=1$ (see item (iii) of Lemma~\ref{lem:basic}).

In~\cite{R}, the author discusses Problem~\ref{pr} under the natural condition that there exist $k$-normal elements in $\F_{q^n}$ over $\F_q$. With this assumption, he gives positive answer to Problem~\ref{pr} with $k$ in the range $[0, n/2(1-\varepsilon_{q, n})]$, where $\varepsilon_{q, n}\to 0$ if $q\to+\infty$ or $n\to+\infty$. In particular, if $n$ is fixed, $k<n/2$ and $q$ is sufficiently large, there exist primitive $k$-normal elements whenever $k$-normal elements actually exist. The extreme $k=n/2$ have shown to be a genuine exception when $n=4$ and $q\equiv 3\pmod 4$. See Proposition 3.4 in~\cite{R} and the comments thereafter for more details. 

We observe that, in general, Problem~\ref{pr} contains the following seemingly hard sub problem. 
\begin{problem}\label{pr2}
Given $k$, determine the pairs $(q, n)$ with $n\ge k$ such that $x^n-1$ has a divisor of degree $k$, defined over $\F_q$.
\end{problem}
An integer $n$ is called $\F_q$-practical if $x^n-1$ has divisors over $\F_q$ of every possible degree $0\le k\le n$. When $q=p$ is a prime, we have estimates on the number of $\F_p$-practical numbers up to $x$~\cite{PTW, T13}, where the result in~\cite{T13} is conditioned to the GRH. These seem to be the most significant progress on Problem~\ref{pr2}. A more detailed discussion on Problem~\ref{pr2} is provided in~Section~5 of~\cite{R}, where an infinite family of $\F_q$-practical numbers is provided.

We aim to improve results in~\cite{R} in the context of Problem~\ref{pr}, taking into account the hardness of  Problem~\ref{pr2}. We first discuss a more restricted assumption that we have to impose on $k$. The proof of the non existence of primitive $(n-1)$-normal elements in~\cite{HMPT} is given as follows. If $n>1$ and $\alpha\in \F_{q^n}^*$ is $(n-1)$-normal, $m_{\alpha, q}$ has degree one and so it is of the form $x-\delta$ with $\delta\in \F_{q}^*$. Therefore, we have that $$0=(x-\delta)\circ \alpha=\alpha^q-\delta \alpha.$$ In particular, $\alpha^{q-1}=\delta$ and so $\alpha^{(q-1)^2}=1$. The latter entails that the multiplicative order of $\alpha$ is at most $(q-1)^2<q^n-1$, hence $\alpha$ cannot be primitive. This idea, that was also employed in~\cite{R} for $(n, k)=(4, 2)$, is easily extended in the following lemma.

\begin{lemma}\label{lem:basic2}
If $\alpha\in \F_{q^n}$ is such that $m_{\alpha, q}$ divides a binomial $x^t-\delta\in \F_q[x]$ with $1\le t<n$, then $\alpha$ is not a primitive element of $\F_{q^n}$.
\end{lemma}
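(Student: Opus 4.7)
The plan is to mimic the argument sketched just before the lemma for the case $t=1$, which showed that $(n-1)$-normal elements cannot be primitive. The starting observation is that if $m_{\alpha,q}$ divides $x^t-\delta$, then by the defining property of the $\mathbb{F}_q$-order recalled in Lemma~\ref{lem:basic}, the element $x^t-\delta$ lies in the ideal of annihilating $q$-polynomials of $\alpha$, so $(x^t-\delta)\circ\alpha=0$, which reads $\alpha^{q^t}=\delta\alpha$.

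First I would dispose of the trivial cases. If $\alpha=0$ it is clearly not primitive. If $\delta=0$ then $\alpha^{q^t}=0$ forces $\alpha=0$, and again the conclusion is immediate. So I may assume $\alpha\ne 0$ and $\delta\in\mathbb{F}_q^*$. Dividing $\alpha^{q^t}=\delta\alpha$ by $\alpha$, I obtain $\alpha^{q^t-1}=\delta$. Since $\delta\in\mathbb{F}_q^*$ satisfies $\delta^{q-1}=1$, raising to the $(q-1)$-th power gives
\[
\alpha^{(q^t-1)(q-1)}=1,
\]
so the multiplicative order of $\alpha$ divides $(q^t-1)(q-1)$.

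It then suffices to verify the numerical inequality $(q^t-1)(q-1)<q^n-1$ for $1\le t\le n-1$ and $q\ge 2$. Expanding, $(q^t-1)(q-1)=q^{t+1}-q^t-q+1$. If $t+1\le n-1$ this is at most $q^{n-1}<q^n-1$; if $t+1=n$, then $(q^{n-1}-1)(q-1)=q^n-q^{n-1}-q+1<q^n-1$, since $q^{n-1}+q\ge q+q>2$. In either case the order of $\alpha$ is strictly less than $q^n-1$, so $\alpha$ is not a primitive element of $\mathbb{F}_{q^n}$.

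There is no real obstacle here; the whole argument is essentially the $t=1$ computation from \cite{HMPT} together with an elementary estimate. The only mild point worth highlighting in the write-up is the passage from $m_{\alpha,q}\mid x^t-\delta$ to $(x^t-\delta)\circ\alpha=0$, which uses the ideal structure of annihilators recorded in Lemma~\ref{lem:basic}.
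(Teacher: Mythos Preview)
Your proof is correct and follows essentially the same route as the paper's own proof: from $m_{\alpha,q}\mid x^t-\delta$ deduce $\alpha^{q^t}=\delta\alpha$, hence $\alpha^{(q^t-1)(q-1)}=1$, and conclude by the inequality $(q^t-1)(q-1)<q^n-1$ for $t<n$. You simply add the disposal of trivial cases and spell out the numerical inequality that the paper asserts without justification.
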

\begin{proof}
If $m_{\alpha, q}$ divides $x^t-\delta$ and $\delta\in \F_{q}^*$, we have that $0=(x^t-\delta)\circ \alpha=\alpha^{q^t}-\delta \alpha$ and so $\alpha^{(q^t-1)(q-1)}=1$. However, $(q^t-1)(q-1)<q^n-1$ if $t<n$. The latter implies that $\alpha$ cannot be a primitive element of $\F_{q^n}$.
\end{proof}

Lemmas~\ref{lem:basic} and~\ref{lem:basic2} motivate us to introduce the following definition.

\begin{definition}
An element $\alpha\in \F_{q^n}$ is free of binomials if $f(x)=x^n-1$ is the unique monic binomial  $f\in \F_q[x]$ of degree at most $n$ such that $f\circ \alpha=0$.
\end{definition}

In the context of Problem~\ref{pr}, Lemma~\ref{lem:basic2} entails that the existence of $k$-normal elements that are free of binomials is necessary. In the following theorem we prove that, for $q$ sufficiently large, this condition is also sufficient.

\begin{theorem}\label{thm:k-normal}
Let $n\ge 2$ be a positive integer. Then there exists a constant $c(n)>0$ such that, for every $q>c(n)$ and every $0\le k\le n-2$, the following are equivalent:

\begin{enumerate}[(i)]
\item there exists a $k$-normal element in $\F_{q^n}$ over $\F_q$ that is free of binomials;

\item there exists a $k$-normal element in $\F_{q^n}$ over $\F_q$ that is primitive.
\end{enumerate}
\end{theorem}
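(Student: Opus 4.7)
The direction $(ii)\Rightarrow(i)$ is immediate from Lemma~\ref{lem:basic2}: the $\F_q$-order of a primitive $k$-normal element has degree $n-k\ge 2$ and cannot divide any binomial $x^t-\delta$ with $\delta\in\F_q^*$ and $1\le t<n$, so such an element is automatically free of binomials.

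For $(i)\Rightarrow(ii)$, let $\alpha_0\in\F_{q^n}$ be a $k$-normal element free of binomials, set $g:=m_{\alpha_0,q}$ (a monic divisor of $x^n-1$ of degree $n-k\ge 2$), and define $W_g:=\{\alpha\in\F_{q^n}:g\circ\alpha=0\}$, an $\F_q$-vector space of dimension $n-k$ that contains $\alpha_0$. The crux of the argument is to verify that $W_g$ satisfies the hypothesis of Proposition~\ref{prop:main}. Suppose it does not; then Proposition~\ref{prop:crucial} furnishes a nonzero $y\in W_g$ and a proper divisor $d$ of $n$ with $W_g\subseteq y\cdot\F_{q^d}$. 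Since $W_g$ is stable under the Frobenius, $y^q\in W_g\subseteq y\cdot\F_{q^d}$ and hence $y^{q-1}\in\F_{q^d}$; taking the norm from $\F_{q^d}$ to $\F_q$, the element $c:=y^{q^d-1}=N_{\F_{q^d}/\F_q}(y^{q-1})$ lies in $\F_q^*$. For every $\alpha=yu\in W_g$ with $u\in\F_{q^d}$, we then have $\alpha^{q^d}=y^{q^d}\,u=c\alpha$, so $(x^d-c)\circ\alpha=0$ on all of $W_g$; in particular $g$ divides the binomial $x^d-c\in\F_q[x]$ with $c\in\F_q^*$ and $d<n$, contradicting the hypothesis that $\alpha_0$ is free of binomials.

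With the hypothesis of Proposition~\ref{prop:main} verified for $W_g$, the proposition yields $\mathcal{P}(W_g)\gg_{n,\varepsilon} q^{n-k-\varepsilon}$ for every $\varepsilon>0$ and every $q$ large enough in terms of $n$ and $\varepsilon$. A primitive element $\alpha\in W_g$ is $k$-normal precisely when its $\F_q$-order equals $g$; the primitive elements of $W_g$ whose $\F_q$-order properly divides $g$ all lie in $\bigcup_{h'}W_{h'}$ as $h'$ ranges over the maximal proper divisors of $g$. Each such $W_{h'}$ has $\F_q$-dimension at most $n-k-1$, and $g$ has at most $\omega(g)\le n$ maximal proper divisors, so the number of primitive $k$-normal elements of $\F_{q^n}$ whose $\F_q$-order equals $g$ is at least
\[
\mathcal{P}(W_g)-\sum_{h'}|W_{h'}| \;\ge\; \mathcal{P}(W_g)-n\,q^{n-k-1},
\]
which is positive for $q>c(n)$ sufficiently large. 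The main obstacle is the key step in the previous paragraph---forcing $c=y^{q^d-1}$ to land in $\F_q^*$ rather than merely in $\F_{q^d}^*$---which hinges on the Frobenius stability of $W_g$ producing $y^{q-1}\in\F_{q^d}$, followed by the norm identity $y^{q^d-1}=N_{\F_{q^d}/\F_q}(y^{q-1})$.
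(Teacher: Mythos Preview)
Your proof is correct. The overall architecture matches the paper's---reduce to Proposition~\ref{prop:main} applied to the kernel space $W_g$ of $L_g$, then subtract off the elements whose $\F_q$-order properly divides $g$---but the way you verify the hypothesis of Proposition~\ref{prop:main} differs. The paper argues directly: it exhibits the specific pair $y=\alpha^q$, $z=\alpha$ (both in $\mathcal V_g$ since $n-k\ge 2$) and shows $\alpha^{q-1}$ has degree $n$ by noting that if $\alpha^{q-1}\in\F_{q^d}$ with $d<n$ then $(\alpha^{q^d-1})^{q-1}=1$, forcing $\alpha^{q^d-1}\in\F_q^*$ and hence $g\mid x^d-\delta$. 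You instead argue by contradiction through Proposition~\ref{prop:crucial}, using Frobenius stability of $W_g$ to obtain $y^{q-1}\in\F_{q^d}$ and then the norm identity $y^{q^d-1}=N_{\F_{q^d}/\F_q}(y^{q-1})$ to land $c\in\F_q^*$. Both routes arrive at the same contradiction (a forbidden binomial divisor of $g$); the paper's is shorter because it names the witnessing pair up front, while yours leans on an extra structural tool but makes the role of Frobenius stability explicit. In the final count, the paper bounds the bad elements by $2^{n-k}\cdot q^{n-k-1}$ using all divisors of $g$, whereas you use the sharper $n\cdot q^{n-k-1}$ via maximal proper divisors; either estimate suffices for the asymptotic conclusion.
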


\begin{proof}
The direction (ii)$\rightarrow$(i) follows by Lemma~\ref{lem:basic2}. For the direction (i)$\rightarrow$(ii), suppose that there exists a $k$-normal element free of binomials in $\F_{q^n}$. In particular, there exists an $(n-k)$-degree monic divisor $g(x)=\sum_{i=0}^{n-k}a_ix^i\in \F_q[x]$ of $x^n-1$ such that $g$ does not divide any binomial $x^t-\delta$ with $t<n$ and $\delta\in \F_q$. Let $\alpha\in \F_{q^n}^*$ be an element whose $\F_q$-order equals $m_{\alpha, q}(x)=g(x)$ and let $\mathcal V_{g}$ be the $(n-k)$-dimensional $\F_q$-vector space comprising the roots of 
$L_g(x):=\sum_{i=0}^{n-k}a_ix^{q^i}$.

It follows by the definition that the set $\{\alpha, \ldots, \alpha^{q^{n-k-1}}\}$ comprises an $\F_q$-basis for $\mathcal V_g$, and so $\mathcal V_g\subseteq \F_{q^n}$. We claim that $\alpha^{q-1}=\alpha^q\cdot \alpha^{-1}$ has degree $n$ over $\F_q$. In fact, if $\alpha^{q-1}$ has degree $d$ over $\F_q$ with $d<n$, we have that $\alpha^{(q-1)(q^{d}-1)}=1$, i.e., $\alpha^{q^d-1}=\delta\in \F_q^*$. In other words, $(x^d-\delta)\circ \alpha=0$. Lemma~\ref{lem:basic} entails that $g(x)=m_{\alpha, q}(x)$ divides $x^d-\delta$, a contradiction with the initial assumption on $g(x)$. Therefore, we are under the conditions of Proposition~\ref{prop:main}.  In particular, there exists a constant $\kappa>0$ such that $\mathcal V_{g}$ contains at least $q^{n-k-1/2}$ primitive elements for $q>\kappa$. From Lemma~\ref{lem:basic}, any element $\beta\in \mathcal V_{g}$ with $m_{\beta, q}(x)\ne g(x)$ must satisfy $m_{\beta, q}(x)=h(x)$, where $h(x)\in \F_q[x]$ is a monic divisor of $g(x)$ with degree at most $(n-k)-1$. In particular, from item (iii) in Lemma~\ref{lem:basic}, the number of such $\beta$'s is at most $q^{n-k-1}\cdot d$, where $d$ is the number of distinct monic divisors of $g(x)$ that are defined over $\F_q$. From the trivial bound $d\le 2^{n-k}$, there exists a primitive element $\alpha_0\in \mathcal V_g$ with $m_{\alpha_0, q}(x)=g(x)$ provided that
$$q^{n-k-1/2}-2^{n-k}q^{n-k-1}=q^{n-k-1}(q^{1/2}-2^{n-k})>0.$$
It suffices to require that $q>4^n$. In this case, such $\alpha_0\in \F_{q^n}$ is a primitive $k$-normal element over $\F_q$.
\end{proof}

\subsubsection{Primitive elements with low normality}
As previously mentioned, the existence of primitive $k$-normal elements for $k=0, 1$ is naturally expected, since these elements appear with high frequency in $\F_{q^n}$. However, an interesting conjecture on Artin-Schreier extensions suggests that we may have counter intuitive configurations. If $q=p$ is a prime and $a\in \mathbb\F_{p}^*$ is a primitive element, the Artin-Schreier polynomial $x^p-x-a\in \F_p[x]$ is irreducible. Therefore, for any root $\theta$ of such polynomial, we have that $\F_{p^p}=\F_p(\theta)$ and the other roots of the same polynomial are the translates $\theta+a, a\in \F_p$. A remarkable conjecture in the theory of finite fields is that $\theta$ is a primitive element of $\F_{p^p}$, i.e., its multiplicative order equals $p^p-1$~\cite{W}. The results of Carlitz-Davenport~\cite{C, D} on extensions $\F_{q^n}/\F_q$ with the translate property cannot be employed since they are asymptotic and generically require that $n<q$; this fails for $n=q=p$. Some study on the multiplicative order of $\theta$ has been made, including arithmetic constraints~\cite{Mont} and lower bounds~\cite{V04}. Since $\theta^p-\theta=a\in \F_p^*$, we have that $m_{\theta, p}(x)=(x-1)^2$ and so $\theta$ is $(p-2)$-normal over $\F_p$. However, since $x^p-1=(x-1)^p$, the $\F_q$-order of every $(p-2)$-normal element equals $(x-1)^2$. In particular, they are roots of the polynomial $x^{p^2}-2x^p+x$ and so the number of such elements is at most $p^2$. The quantity $p^2$ is extremely small when compared to $p^p$ if $p$ is large.

Although such conjecture remains open, we can extend its setting to a similar counter intuitive situation where Theorem~\ref{thm:k-normal} applies. This is shown in the following corollary.

\begin{corollary}
Let $q$ be a power of a prime $p$. Then there exists a constant $c=c(p)>0$ such that, if $q>c$, $\F_{q^p}$ contains primitive $k$-normal elements for every $0\le k\le p-2$.
\end{corollary}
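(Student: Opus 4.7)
The plan is to apply Theorem~\ref{thm:k-normal} with $n = p$, so the problem reduces to showing that, for every $0 \le k \le p-2$, there exists a $k$-normal element of $\F_{q^p}$ over $\F_q$ that is free of binomials. Once this is established, Theorem~\ref{thm:k-normal} furnishes a constant $c(p)>0$ such that for $q>c(p)$ a primitive $k$-normal element exists, yielding the claim uniformly over $k \in [0, p-2]$ (one may take the maximum of the $p-1$ constants obtained).

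In characteristic $p$, the factorization $x^p - 1 = (x-1)^p$ in $\F_q[x]$ shows that the monic divisors of $x^p-1$ over $\F_q$ are precisely $(x-1)^j$ for $0 \le j \le p$. Hence for every $k \in [0, p-2]$, the polynomial $g(x) = (x-1)^{p-k}$ is a monic divisor of $x^p - 1$ of degree $p - k \ge 2$, and by item (iii) of Lemma~\ref{lem:basic}, there exist (at least $(q-1)^{p-k}$) elements $\alpha \in \F_{q^p}$ with $m_{\alpha, q}(x) = (x-1)^{p-k}$. By item (ii) of the same lemma, any such $\alpha$ is $k$-normal over $\F_q$.

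The key remaining step is to verify that any such $\alpha$ is automatically free of binomials. By the definition of $m_{\alpha, q}$ (equivalently, by Lemma~\ref{lem:basic}), a monic binomial $f(x) = x^t - \delta$ of degree $t \le p$ satisfies $f \circ \alpha = 0$ if and only if $(x-1)^{p-k}$ divides $x^t - \delta$ in $\F_q[x]$. Evaluating at $x=1$ forces $\delta = 1$, so we must rule out $x^t - 1$ with $1 \le t < p$. But for $1 \le t < p$ we have $t \not\equiv 0 \pmod p$, so the derivative $tx^{t-1}$ is nonzero at $x=1$; thus $1$ is a simple root of $x^t - 1$, and $(x-1)^{p-k}$ cannot divide $x^t-1$ as soon as $p-k \ge 2$, which is exactly our hypothesis $k \le p-2$. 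Therefore the only monic binomial of degree $\le p$ killed by $\alpha$ is $x^p - 1$, as required.

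I do not anticipate any serious obstacle here: the whole argument is a direct consequence of the degenerate factorization of $x^p-1$ in characteristic $p$, combined with Theorem~\ref{thm:k-normal}. The only mildly delicate point is ensuring that the divisor $(x-1)^{p-k}$ never divides a proper binomial of degree $\le p$, which follows from the simple root computation above and explains precisely why the range stops at $k = p-2$ (consistent with the obstruction in Lemma~\ref{lem:basic2} and with the fact that $(p-1)$-normal elements are never primitive).
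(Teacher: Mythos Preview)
Your proposal is correct and follows essentially the same route as the paper: both reduce to Theorem~\ref{thm:k-normal} by observing that $x^p-1=(x-1)^p$ forces $m_{\alpha,q}=(x-1)^{p-k}$ for any $k$-normal element, and then check that $(x-1)^{p-k}$ with $p-k\ge 2$ divides no binomial $x^t-\delta$ of degree $t<p$. Your derivative argument making the simple-root claim explicit is a welcome extra detail; note also that Theorem~\ref{thm:k-normal} already yields a single constant $c(n)$ valid for all $k\in[0,n-2]$, so taking a maximum over $k$ is unnecessary.
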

\begin{proof}
We observe that $x^p-1=(x-1)^p$ and so there exist $k$-normal elements for every $0\le k\le p$. In fact, for any $k$-normal element $\theta\in \F_{q^p}$, $m_{\theta, q}(x)=(x-1)^{p-k}$. Since $p$ is prime, for every $0\le k\le p-2$, the polynomial $(x-1)^{p-k}$ divides a binomial $x^s-\delta\in \F_q[x]$ with $s\le p$ if and only if $p=s$ and $\delta=1$. In other words, for $0\le k\le p-2$, every $k$-normal element in $\F_{q^p}$ is free of binomials. The result follows from Theorem~\ref{thm:k-normal}.
\end{proof}

\end{document}